\newcommand{\Ei}{\mathrm{Ei}}
\def\Re{\ensuremath{\mathrm{Re}}\,}
\newtheorem{theorem}{Theorem}
\newtheorem{lemma}[theorem]{Lemma}
\begin{document}
\title{Lambert series in analytic number theory}
\author{Jordan Bell \\
jordan.bell@gmail.com}
\date{\today}

\maketitle
\begin{abstract}
Tour of 19th and early 20th century analytic number theory.
\end{abstract}

\section{Introduction}
Let $d(n)$ denote the number of positive divisors of $n$. 
For $|z|<1$,
\[
\sum_{n=1}^\infty d(n) z^n = \sum_{n=1}^\infty \frac{z^n}{1-z^n}.
\]

\section{Euler}
The first use of the term ``Lambert series'' was by Euler to describe the roots of an equation.
 
Euler writes in E25 \cite{E25} about the particular value of a Lambert series.

\section{Lambert}
Bullynck \cite[pp.~157--158]{bullynck}: ``As he recorded in his scientific diary,
the {\em Monatsbuch}, Lambert started
thinking about the divisors of integers in June 1756. An essay by G.W.
Krafft (1701–1754) in the St. Petersburg {\em Novi Commentarii} seems to have
triggered Lambert's interest [Bopp 1916, p. 17, 40].''

Bullynck \cite[p.~163]{bullynck}:

\begin{quote}
Lambert did more than deliver the factor table. He also addressed the
	absence of any coherent theory of prime numbers and divisors. Filling
	such a lacuna could be important for the discovery of new and more primality criteria and
	factoring tests. For Lambert the absence of such a theory was also an occasion to apply the principles
	laid
	out in his philosophical
	work. A fragmentary theory, or one with gaps, needed philosophical and
	mathematical efforts to mature.
	
	\begin{quote}
		To this aim [prime recognition] and others I have looked into the theory
		of prime numbers, but only found certain isolated pieces, which did not seem
		possible to make easily into a connected and well formed system. Euclid has
		few, Fermat some mostly unproven theorems, Euler individual fragments, that
		anyway are farther away from the first beginnings, and leave gaps between them
		and the beginnings. [Lambert 1770, p. 20]
		\end{quote}
\end{quote}

Bullynck \cite[pp.~164--165]{bullynck}:

\begin{quote}
In 1770, Lambert presented two sketches of what would be needed for
something like a theory of numbers. The first dealt mainly with factoring methods [Lambert 1765-1772, II, pp. 1–41], while the second gave a
more axiomatic treatment [Lambert 1770, pp. 20–48]. In the first essay,
Lambert explained how, for composite number with small factors, Eratosthenes'
sieve could be used and optimised. For larger factors, Lambert
explained that approximation from above, starting by division by numbers
that are close to the square root of the tested number $p$, was more advantageous.
For both methods, Lambert advised the use of tables. The
second essay had more theoretical bearings. Lambert rephrased Euclid's
theorems for use in factoring, included the greatest common divisor algorithm, and
put the idea of relatively prime numbers to good use. He also
noted that binary notation, because of the frequent symmetries, could be
helpful. Finally,Lambert also recognized Fermat’s little theorem as a good,
though not infallible criterion for primality, ``but the negative example is
very rar'' [Lambert 1770, p. 43]. 
\end{quote}

Monatsbuch, September 1764, ``Singula haec in Capp. ult. Ontol. occurunt'', and Anm. 5, Anm. 25, 1764, Anm. 12 1765, Anm. 19, 1765 \cite{monatsbuch}.

Lambert  \cite[pp.~506--511, \S 875]{anlage}

Youschkevitch \cite{youschkevitch}

Lorey \cite[p.~23]{lorey}

L\"owenhaupt \cite[p.~32]{lowenhaupt}

\section{Krafft}
Krafft \cite[pp.~244--245]{krafft}

\section{Servois}
Servois \cite{servoisessai} and \cite[p.~166]{servois}

\section{Lacroix}
Lacroix \cite[pp.~465--466, \S 1195]{lacroixIII}

\section{Kl\"ugel}
Kl\"ugel \cite[pp.~52--53, s.v. ``Theiler einer Zahl'', \S 12]{klugel}:

\begin{quote}
Ist $N=\alpha^m \beta^n \gamma^p\cdots$, wo $\alpha,\beta,\gamma$, Primzahlen sind; so erhellet auch leicht, da\ss alle Theiler
von $N$, die Einheit und die Zahl selbst mit engeschlossen, durch die Glieder des Products
\[
(1+\alpha+\alpha^2+\cdots+\alpha^m)(1+\beta+\beta^2+\cdots+\beta^n)(1+\gamma+\gamma^2+\cdots+\gamma^p)\cdots
\]
argestelle werden. Die Anzahl der Glieder dieses Products, d. i. die Anzahl aller Theiler von $N$, ist offenbar 
$=(m+1)(n+1)(p+1)\cdots$. F\"ur das obige Beispiel $=4\cdot 3 \cdot 2=24$, wo die Einheit mit engeschlossen ist.

In der aus der Entwickelung von
\[
\frac{x}{1-x}+\frac{x^2}{1-x^2}+\frac{x^3}{1-x^3}+\cdots+\frac{x^n}{1-x^n}+\cdots
\]
entspringenden Reihe:
\[
x+2x^2+2x^3+3x^4+2x^5+4x^6+2x^7+\cdots
\]
welche Lambert in seiner Architektonik S. 507. mittheilt, enthalt jeder Coefficient so viele Einheiten, als der Exponent der
entsprechendenden Potenz von $x$ Theiler hat.
\end{quote}

\section{Stern}
Stern \cite{stern}

\section{Clausen}
Clausen \cite{clausen} states that
\[
\sum_{n=1}^\infty \frac{x^n}{1-x^n} = \sum_{n=1}^\infty x^{n^2} \left(\frac{1+x^n}{1-x^n}\right),
\]
and that the right-hand series converges quickly for small $x$. Clausen does prove this expansion, and a proof
is later given by Scherk \cite{scherk}. Scherk's argument uses the fact
\[
1+2t+2t^2+2t^3+2t^4+\cdots = 
(1+t+t^2+t^3+t^4+\cdots) + t(1+t+t^2+t^3+t^4+\cdots)=
\frac{1+t}{1-t}.
\]
We write
\[
\sum_{n=1}^\infty \frac{x^n}{1-x^n} = \sum_{n=1}^\infty \sum_{m=1}^\infty x^{nm}.
\]
The series is
\[
\begin{array}{rrrrrrr}
x&+x^2&+x^3&+x^4&+x^5&+x^6&+\mathrm{etc.}\\
+x^2&+x^4&+x^6&+x^8&+x^{10}&+x^{12}&+\mathrm{etc.}\\
+x^3&+x^6&+x^9&x^{12}&+x^{15}&+x^{18}&+\mathrm{etc.}\\
+x^4&+x^8&+x^{12}&+x^{16}&+x^{20}&+x^{24}&+\mathrm{etc.}\\
+x^5&+x^{10}&+x^{15}&+x^{20}&+x^{25}&+x^{30}&+\mathrm{etc.}\\
+x^6&+x^{12}&+x^{18}&+x^{24}&+x^{30}&+x^{36}&+\mathrm{etc.}\\
+\mathrm{etc.}&&&&&&
\end{array}
\]
We sum the terms in the first row and column: the sum of these is
\[
x+2x^2+2x^3+2x^4+\mathrm{etc.} = x\left( \frac{1+x}{1-x}\right).
\]
Then, from what remains we sum the terms in the second row and column: the sum of these is
\[
x^4+2x^6+2x^8+2x^{10}+\mathrm{etc.} = x^4\left( \frac{1+x^2}{1-x^2}\right).
\]
 Then,
from what remains, we sum the terms in the third row and column: the sum of these is
\[
x^9+2x^{12}+2x^{15}+2x^{18}+\mathrm{etc.} = x^9\left(\frac{1+x^3}{1-x^3}\right),
\]
etc.

\section{Eisenstein}
Eisenstein \cite{eisenstein1844} states that
for $|z|<1$,
\[
\sum_{n=1}^\infty \frac{z^n}{1-z^n} =\frac{1}{(1-x)(1-x^2)(1-x^3)\cdots} \sum_{n=1}^\infty 
(-1)^{n+1} \frac{nz^{n(n+1)/2}}{(1-x)\cdots(1-x^n)}.
\]

For $t=\frac{1}{z}$,
Eisenstein states that
\[
\frac{z}{1-z}+\frac{z^2}{1-z^2}+\frac{z^3}{1-z^3}+\frac{z^4}{1-z^4}+\textrm{etc.}
\]
is equal to
\[
\cfrac{1}{t-1-\cfrac{(t-1)^2}{t^2-1-\cfrac{t(t-1)^2}{t^3-1-\cfrac{t(t^2-1)^2}{t^4-1-\cfrac{t^2(t^2-1)^2}{t^5-1-
\cfrac{t^2(t^3-1)^2}{t^6-1-\cfrac{t^3(t^3-1)^2}{t^7-1-\textrm{etc.}}}}}}}}
\]

Expressing Lambert series using continued fractions is relevant to the irrationality of the value of the series. See
Borwein \cite{borwein}. See also Zudilin \cite{zudilin}.

\section{M\"obius}
M\"obius \cite{mobius}

\section{Jacobi}
Jacobi's {\em Fundamenta nova} \cite[\S 40, 66 and p.~185]{fundamenta}

Chandrasekharan \cite[Chapter X]{chandrasekharan}: using Lambert series to prove the four squares theorem.

\section{Dirichlet}
Dirichlet \cite{dirichlet1838}

Fischer \cite{fischer}

\section{Cauchy}
Cauchy \cite{cauchy1843a} and \cite{cauchy1843b} two memoirs in the same volume.

\section{Burhenne}
Burhenne \cite{burhenne} says the following about Lambert series.
For
\[
F(x) = \sum_{n=1}^\infty d(n) x^n,
\]
we have
\[
d(n) = \frac{F^{(n)}(0)}{n!}.
\]
Define
\[
F_k(x) = \frac{x^k}{1-x^k},
\]
so that
\[
F(x) = \sum_{k=1}^\infty F_k(x).
\]
It is apparent that if $k>n$, then
\[
F_k^{(n)}(0) = 0,
\]
hence
\[
F^{(n)}(0)=
 \sum_{k=1}^n F_k^{(n)}(0).
\]

The above suggests finding explicit expressions for $F_k^{(n)}(0)$. 
Burhenne cites Sohncke \cite[pp.~32--33]{sohncke}: for even $k$,
\begin{align*}
\frac{d^n\left(\frac{x^p}{x^k-a^k}\right)}{dx^n}&=(-1)^n \frac{n!}{ka^{k-p-1}}
\left(\frac{1}{(x-a)^{n+1}} - (-1)^p \frac{1}{(x+a)^{n+1}}\right)\\
&+(-1)^n \frac{n!}{\frac{1}{2}ka^{k-p-1}} \sum_{h=1}^{\frac{1}{2}k-1}
\frac{\cos\left(\frac{2h(p+1)\pi}{k}+(n+1)\phi_h\right)}{\sqrt{\left(x^2-2xa\cos \frac{2h\pi}{n}+a^2\right)^{n+1}}}
\end{align*}
and for odd $k$,
\begin{align*}
\frac{d^n\left(\frac{x^p}{x^k-a^k}\right)}{dx^n}&=(-1)^n \frac{n!}{ka^{k-p-1}} \frac{1}{(x-a)^{n+1}}\\
&+(-1)^n \frac{n!}{\frac{1}{2}ka^{k-p-1}} \sum_{h=1}^{\frac{k-1}{2}} \frac{\cos\left( \frac{2h(p+1)\pi}{k}+(n+1)\phi_h \right)}{\sqrt{\left( x^2-2xa\cos \frac{2h\pi}{n}+a^2 \right)^{n+1}}},
\end{align*}
where
\[
\cos \phi_h = \frac{x-a\cos\frac{2h\pi}{k}}{\sqrt{x^2-2xa\cos \frac{2h\pi}{k}+a^2}},
\quad \sin\phi_h = \frac{a\sin \frac{2h\pi}{k}}{\sqrt{x^2-2xa\cos \frac{2h\pi}{k}+a^2}}.
\]
For $a=1$ and $x=0$,
\[
\cos \phi_h = -\cos \frac{2h\pi}{k}, \qquad
\sin \phi_h = \sin \frac{2h \pi}{k},
\]
from which
\[
\phi_h = \pi-\frac{2h\pi}{k},
\]
and thus
\begin{align*}
\cos\left( \frac{2h(k+1)\pi}{k}+(n+1)\phi_h \right)&=
\cos\left( \frac{2h(k+1)\pi}{k}+(n+1)\left( \pi-\frac{2h\pi}{k}\right) \right)\\
&=\cos\left( 2h\pi+\frac{2h\pi}{k}+\pi-\frac{2h\pi}{k}+n\left( \pi-\frac{2h\pi}{k}\right)\right)\\
&=\cos \left( (n+1)\pi- \frac{2nh\pi}{k}\right)\\
&=(-1)^{n+1} \cos \frac{2nh\pi}{k}.
\end{align*}
For even $k$, taking $p=k$ we have
\[
\frac{d^n\left(\frac{x^k}{1-x^k}\right)}{dx^n}=(-1)^{n+1} \frac{n!}{k}\left( \frac{1}{(-1)^{n+1}}-1\right)
+(-1)^{n+1} \frac{n!}{\frac{1}{2}k} \sum_{h=1}^{\frac{1}{2}k-1} (-1)^{n+1} \cos \frac{2nh\pi}{k},
\]
i.e.,
\[
F_k^{(n)}(0) = \frac{n!}{k}(1-(-1)^{n+1})
+\frac{2\cdot n!}{k} \sum_{h=1}^{\frac{1}{2}k-1} \cos \frac{2nh\pi}{k}.
\]
For odd $k$, taking $p=k$ we have
\[
\frac{d^n\left(\frac{x^k}{1-x^k}\right)}{dx^n}=(-1)^{n+1} \frac{n!}{k} \frac{1}{(-1)^{n+1}}
+(-1)^{n+1} \frac{n!}{\frac{1}{2}k} \sum_{h=1}^{\frac{k-1}{2}} (-1)^{n+1} \cos \frac{2nh\pi}{k},
\]
i.e.,
\[
F_k^{(n)}(0) = \frac{n!}{k}+\frac{2\cdot n!}{k} \sum_{h=1}^{\frac{k-1}{2}} \cos \frac{2nh\pi}{k}.
\]
Using the identity, for $h \not \in 2\pi \mathbb{Z}$,
\[
\sum_{h=1}^M \cos h\theta = - \frac{1}{2}+\frac{\sin\left(M+\frac{1}{2}\right)\theta}{2\sin\frac{\theta}{2}}
=-\frac{1}{2}+\frac{1}{2}\left(\sin M\theta \cot \frac{\theta}{2}+\cos M \theta\right),
\] 
we  get for even $k$,
\begin{align*}
F_k^{(n)}(0) &= \begin{cases}
\frac{n!}{k} \cot \frac{n\pi}{k} \sin n\pi&k \not | n\\
\frac{n!}{k}(1-(-1)^{n+1})
+\frac{2\cdot n!}{k} \left(\frac{1}{2}k-1\right)
&k  | n
\end{cases}\\
&=\begin{cases}
0&k \not | n\\
n!-\frac{n!}{k}(1+(-1)^{n+1})& k| n.
\end{cases}
\end{align*}
For odd $k$,
\begin{align*}
F_k^{(n)}(0) 
&=\begin{cases}
\frac{n!}{k} \csc \frac{n\pi}{k} \sin n\pi&k \not | n\\
\frac{n!}{k}+\frac{2\cdot n!}{k} \frac{k-1}{2}&k | n.
\end{cases}\\
&=\begin{cases}
0&k \not | n\\
n!&k|n.
\end{cases}
\end{align*}

\section{Zehfuss}
Zehfuss \cite{zehfuss}

\section{Bernoulli numbers}
The \textbf{Bernoulli polynomials} are defined by
\[
\frac{te^{tx}}{e^t-1} = \sum_{m=0}^\infty B_m(x) \frac{t^m}{m!}.
\]
The \textbf{Bernoulli numbers} are defined by $B_m=B_m(0)$. 

We denote by $[x]$ the greatest integer $\leq x$, and we define $\{x\}=x-[x]$, namely, the fractional part of $x$.
We define $P_m(x)=B_m(\{x\})$, the \textbf{periodic Bernoulli functions}.

\section{Euler-Maclaurin summation formula}
Euler E47 and E212, \S 142, for the summation formula. Euler's studies the gamma function in E368. In particular,
in \S 12 he gives Stirling's formula, and
in \S 14 he obtains $\Gamma'(1)=-\gamma$. 
Euler in \S 142 of E212 states that
\[
\gamma = \frac{1}{2}+\sum_{n=1}^\infty \frac{(-1)^{n+1}B_{2n}}{2n}.
\]

Bromwich \cite[Chapter~XII]{bromwich}

The Euler-Maclaurin summation formula \cite[p.~280, Ch.~VI, Eq.~35]{bourbaki} tells us that for $f \in C^\infty([0,1])$,
\[
f(0)=\int_0^1 f(t) dt
+B_1(f(1)-f(0))+
\sum_{m=1}^k \frac{1}{(2m)!} B_{2m} ( f^{(2m-1)}(1)-f^{(2m-1)}(0))+R_{2k},
\]
where 
\[
R_{2k} = -\int_0^1 \frac{P_{2k}(1-\eta)}{(2k)!} f^{(2k)}(\eta) d\eta.
\]

Poisson and Jacobi  on the Euler-Maclaurin summation formula.

\section{Schl\"omilch}
Schl\"omilch \cite{schlomilch1861}  and  \cite[p.~238]{compendium}, \cite{schlomilch1863}

For $m \geq 1$,
\begin{equation}
\int_0^\infty \frac{t^{2m-1}}{e^{2\pi t}-1} dt=(-1)^{m+1} \frac{B_{2m}}{4m}.
\label{bernoulliInt}
\end{equation}
For $\alpha>0$,
\begin{equation}
\int_0^\infty \frac{\sin \alpha t}{e^{2\pi t}-1} dt = \frac{1}{4}+\frac{1}{2}\left(\frac{1}{e^\alpha-1}-\frac{1}{\alpha}\right)
\label{sinalpha}
\end{equation}
and
\begin{equation}
\int_0^\infty \frac{1-\cos \alpha t}{e^{2\pi t}-1} \frac{dt}{t} = \frac{1}{4}\alpha+\frac{1}{2}\left(\log(1-e^{-\alpha})-\log \alpha\right).
\label{cosalpha}
\end{equation}

For $\xi>0$ and $n \geq 1$, using \eqref{sinalpha} with $\alpha=\xi,2\xi,3\xi,\ldots,2n\xi$ 
and also using
\[
\sum_{k=1}^N \sin k\theta = \frac{1}{2}\cot \frac{\theta}{2}-\frac{\cos(N+\frac{1}{2})\theta}{2\sin \frac{\theta}{2}},
\]
we get
\begin{align*}
\sum_{m=1}^{2n} \left(\frac{1}{e^{m\xi}-1} -\frac{1}{m\xi} \right)&=\sum_{m=1}^{2n} \left(-\frac{1}{2}+2\int_0^\infty \frac{\sin m\xi t}{e^{2\pi t}-1} dt \right)\\
&=-n + \int_0^\infty \frac{1}{e^{2\pi t}-1} \sum_{m=1}^{2n} 2\sin m\xi t dt\\
&=-n+\int_0^\infty \frac{1}{e^{2\pi t}-1} \left(\cot \frac{\xi t}{2}-\frac{\cos(2n+\frac{1}{2})\xi t}{\sin \frac{\xi t}{2}} \right) dt.
\end{align*}
Using $\cos(a+b)=\cos a \cos b - \sin a \sin b$, this becomes
\begin{align}
\sum_{m=1}^{2n} \left(\frac{1}{e^{m\xi}-1} -\frac{1}{m\xi} \right)&=
-n+\int_0^\infty \frac{1}{e^{2\pi t}-1} (1-\cos 2n\xi t)\cot \frac{\xi t}{2}dt \label{xisum}\\
& +\int_0^\infty \frac{1}{e^{2\pi t}-1}  \sin 2n\xi t dt.
\nonumber
\end{align}

For $\alpha = 2n\xi$, \eqref{cosalpha} tells us
\[
\int_0^\infty \frac{1-\cos 2n\xi t}{e^{2\pi t}-1} \frac{dt}{t} = \frac{1}{4}\cdot 2n\xi + \frac{1}{2}\left( \log(1-e^{-2n\xi})-\log 2n\xi\right).
\]
Rearranging, 
\begin{equation}
\frac{\log 2n}{\xi} = n + \frac{\log(1-e^{-2n \xi})-\log \xi}{\xi} -\frac{2}{\xi}
\int_0^\infty \frac{1-\cos 2n\xi t}{e^{2\pi t}-1} \frac{dt}{t}
\label{log2n}
\end{equation}
Adding \eqref{xisum} and \eqref{log2n} gives
\[
\begin{split}
&\sum_{m=1}^{2n} \frac{1}{e^{m\xi}-1} - \frac{1}{\xi} \left( - \log 2n+\sum_{m=1}^{2n} \frac{1}{m}  \right)\\
=& \frac{\log(1-e^{-2n \xi})-\log \xi}{\xi}
-\int_0^\infty \left(\frac{2}{\xi t}-\cot \frac{\xi t}{2} \right) \frac{1-\cos 2n\xi t}{e^{2\pi t}-1} dt\\
&+\int_0^\infty \frac{1}{e^{2\pi t}-1}  \sin 2n\xi t dt.
\end{split}
\]
Writing
\[
C_n = -\log n + \sum_{m=1}^n \frac{1}{m}
\]
and using \eqref{sinalpha} this becomes
\[
\begin{split}
&\sum_{m=1}^{2n} \frac{1}{e^{m\xi}-1} - \frac{1}{\xi} C_{2n}\\
=&\frac{\log(1-e^{-2n \xi})-\log \xi}{\xi}
-2\int_0^\infty \left(\frac{1}{\xi t}-\frac{1}{2}\cot \frac{\xi t}{2} \right) \frac{1-\cos 2n\xi t}{e^{2\pi t}-1} dt\\
&+\frac{1}{4}+\frac{1}{2}\left( \frac{1}{e^{2n \xi}-1}-\frac{1}{2n\xi}\right).
\end{split}
\]
We write
\[
I_{2n}(\xi) = 2\int_0^\infty \left(\frac{1}{\xi t}-\frac{1}{2}\cot \frac{\xi t}{2} \right) \frac{1-\cos 2n\xi t}{e^{2\pi t}-1} dt,
\]
and we shall obtain an asymptotic formula for $I_{2n}(\xi)$.

We apply the Euler-Maclaurin summation formula.
Let $h>0$, and for $f(t)=\cos ht$ we have $f'(t)=-h\sin ht$, and for $m \geq 1$ we have
$f^{(2m)}(t)=(-1)^m h^{2m} \cos ht$ and $f^{(2m-1)}(t)=(-1)^m h^{2m-1} \sin ht$. 
Thus the Euler-Maclaurin formula yields
\[
1 = \int_0^1 \cos ht dt - \frac{1}{2}(\cos h - 1)
+\sum_{m=1}^k \frac{1}{(2m)!} B_{2m} (-1)^m h^{2m-1} \sin h + R_{2k}.
\]
Using the identity $\cot \frac{\theta}{2} = \frac{1+\cos \theta}{\sin \theta}$ and dividing by $\sin h$, this becomes
\begin{equation}
\frac{1}{2} \cot \frac{h}{2} =\frac{1}{h} 
+\sum_{m=1}^k \frac{1}{(2m)!} B_{2m} (-1)^m h^{2m-1} + \frac{1}{\sin h} R_{2k}.
\label{EMcos}
\end{equation}
Because $P_m(1-\eta)=P_m(\eta)$ for even $m$,
\begin{align*}
R_{2k}& = - \int_0^1 \frac{P_{2k}(\eta)}{(2k)!}(-1)^k h^{2k} \cos h\eta d\eta\\
&=- B_{2k} \int_0^1 \frac{1}{(2k)!}(-1)^k h^{2k} \cos h\eta d\eta
- \int_0^1 \frac{(P_{2k}(\eta)-B_{2k})}{(2k)!}(-1)^k h^{2k} \cos h\eta d\eta
\\
&=(-1)^{k+1} \frac{B_{2k} h^{2k}}{(2k)!}  \frac{\sin h}{h}
+(-1)^{k+1} \frac{h^{2k}}{(2k)!} \int_0^1 (P_{2k}(\eta)-B_{2k})  \cos h\eta d\eta.
\end{align*}
Since $P_{2k}(\eta)-B_{2k}$ does not change sign on $(0,1)$, by the mean-value theorem for integration there
is some $\theta=\theta(h,k)$, $0<\theta<1$, such that (using 
$\int_0^1 P_{2k}(\eta) d\eta=0$)
\[
 \int_0^1 (P_{2k}(\eta)-B_{2k})  \cos h\eta d\eta=
 \cos h\theta  \int_0^1 (P_{2k}(\eta)-B_{2k}) d\eta 
 =-B_{2k} \cos h\theta.
\]
Therefore \eqref{EMcos} becomes
\begin{align*}
\frac{1}{2} \cot \frac{h}{2} - \frac{1}{h}&=
\sum_{m=1}^k \frac{1}{(2m)!} B_{2m} (-1)^m h^{2m-1}\\
&+(-1)^{k+1} \frac{B_{2k} h^{2k-1}}{(2k)!}+(-1)^{k+2} \frac{h^{2k}}{(2k)! \sin h}B_{2k} \cos h\theta,
\end{align*}
i.e., 
\[
\frac{1}{2} \cot \frac{h}{2} - \frac{1}{h}=
\sum_{m=1}^{k-1} \frac{1}{(2m)!} B_{2m} (-1)^m h^{2m-1}+(-1)^k \frac{h^{2k}}{(2k)! \sin h}B_{2k} \cos h\theta.
\]

Write
\[
E_k(h) = (-1)^{k+1} \frac{h^{2k}}{(2k)! \sin h}B_{2k} \cos h\theta.
\]
We apply the above to $I_{2n}(\xi)$, and get, for any $k \geq 1$,
\begin{align*}
I_{2n}(\xi)&=2\int_0^\infty \left(E_k(\xi t)-
\sum_{m=1}^{k-1} \frac{1}{(2m)!} B_{2m} (-1)^m (\xi t)^{2m-1} \right) \frac{1-\cos 2n\xi t}{e^{2\pi t}-1} dt\\
&= -2\sum_{m=1}^{k-1} \frac{1}{(2m)!} B_{2m} (-1)^m \xi^{2m-1} \int_0^\infty  t^{2m-1} 
 \frac{1-\cos 2n\xi t}{e^{2\pi t}-1} dt\\
 &+2 \int_0^\infty E_k(\xi t)  \frac{1-\cos 2n\xi t}{e^{2\pi t}-1} dt.
\end{align*} 
Using \eqref{bernoulliInt},
\begin{align*}
 \int_0^\infty  t^{2m-1} 
 \frac{1-\cos 2n\xi t}{e^{2\pi t}-1} dt&=
 \int_0^\infty \frac{t^{2m-1}}{e^{2\pi t}-1} dt
 -\int_0^\infty \frac{t^{2m-1} \cos 2n\xi t}{e^{2\pi t}-1} dt\\
 &=(-1)^{m+1} \frac{B_{2m}}{4m} -\int_0^\infty \frac{t^{2m-1} \cos 2n\xi t}{e^{2\pi t}-1} dt.
\end{align*}
Let
\[
f(x)=\frac{1}{e^x-1}-\frac{1}{x}.
\]
By \eqref{sinalpha},
\[
f(x)+\frac{1}{2} = 2 \int_0^\infty \frac{\sin x t}{e^{2\pi t}-1} dt.
\]
For $m \geq 1$, 
\[
f^{(2m-1)}(x) = 2\int_0^\infty \frac{(-1)^{m-1} t^{2m-1} \cos x t}{e^{2\pi t}-1} dt,
\]
which for $x=2n\xi$ becomes
\[
\frac{(-1)^{m-1}}{2} f^{(2m-1)}(2n\xi) = \int_0^\infty \frac{t^{2m-1} \cos 2n\xi t}{e^{2\pi t}-1} dt.
\]
Therefore
\[
2 \int_0^\infty  t^{2m-1} 
 \frac{1-\cos 2n\xi t}{e^{2\pi t}-1} dt
 =(-1)^{m+1} \frac{B_{2m}}{2m}+(-1)^{m} f^{(2m-1)}(2n\xi).
\]
Thus $I_{2n}(\xi)$ is
\begin{align*}
I_{2n}(\xi)&=-\sum_{m=1}^{k-1} \frac{1}{(2m)!} B_{2m} (-1)^m \xi^{2m-1} 
\left(
(-1)^{m+1} \frac{B_{2m}}{2m}+(-1)^{m} f^{(2m-1)}(2n\xi)
\right)
\\
 &+2 \int_0^\infty E_k(\xi t)  \frac{1-\cos 2n\xi t}{e^{2\pi t}-1} dt\\
 &=\sum_{m=1}^{k-1} \frac{B_{2m}^2}{(2m)! 2m} \xi^{2m-1}
 -\sum_{m=1}^{k-1} \frac{B_{2m}}{(2m)!} \xi^{2m-1} f^{(2m-1)}(2n\xi)\\
&+2 \int_0^\infty E_k(\xi t)  \frac{1-\cos 2n\xi t}{e^{2\pi t}-1} dt.
\end{align*}
But
\begin{align*}
\left| \int_0^\infty E_k(\xi t)  \frac{1-\cos 2n\xi t}{e^{2\pi t}-1} dt\right|&=
\left| \int_0^\infty (-1)^{k+1} \frac{(\xi t)^{2k}}{(2k)! \sin \xi t}B_{2k} \cos \xi t \theta 
 \frac{1-\cos 2n\xi t}{e^{2\pi t}-1} dt \right|\\
 &\leq \frac{|B_{2k}|}{(2k)!}  \int_0^\infty  \frac{(\xi t)^{2k}}{|\sin \xi t|} \frac{1-\cos 2n\xi t}{e^{2\pi t}-1} dt.
 \end{align*}
It is a fact that for all $u \in \mathbb{R}$,
\[
\frac{1-\cos 2n u}{|\sin u|} \leq \frac{\pi}{2} \frac{1-\cos 2n u}{u},
\]
we obtain
\[
\begin{split}
&\left| \int_0^\infty E_k(\xi t)  \frac{1-\cos 2n\xi t}{e^{2\pi t}-1} dt\right|\\
\leq& \frac{\pi}{2} \frac{|B_{2k}|}{(2k)!} \int_0^\infty (\xi t)^{2k-1} \frac{1-\cos 2n\xi t}{e^{2\pi t}-1} dt\\
=& \frac{\pi}{2} \frac{|B_{2k}|}{(2k)!} \xi^{2k-1} \cdot \frac{1}{2}
\left(
(-1)^{k+1} \frac{B_{2k}}{2k}+(-1)^{k} f^{(2k-1)}(2n\xi)
\right).
\end{split}
\]
Hence
\begin{align*}
I_{2n}(\xi)&=\sum_{m=1}^{k-1} \frac{B_{2m}^2}{(2m)! 2m} \xi^{2m-1}
 -\sum_{m=1}^{k-1} \frac{B_{2m}}{(2m)!} \xi^{2m-1} f^{(2m-1)}(2n\xi)\\
 &+O\left( \frac{B_{2k}^2}{(2k)!2k} \xi^{2k-1}\right)
 +O\left( \frac{|B_{2k}|}{(2k)!} \xi^{2k-1} f^{(2k-1)}(2n\xi) \right).
\end{align*}
Therefore we have
\[
\begin{split}
&\sum_{m=1}^{2n} \frac{1}{e^{m\xi}-1} - \frac{1}{\xi} C_{2n}\\
=&\frac{\log(1-e^{-2n \xi})-\log \xi}{\xi}
+\frac{1}{4}+\frac{1}{2}\left( \frac{1}{e^{2n \xi}-1}-\frac{1}{2n\xi}\right)-I_{2n}(\xi)\\
=&\frac{\log(1-e^{-2n \xi})-\log \xi}{\xi}+\frac{1}{4}+\frac{1}{2}\left( \frac{1}{e^{2n \xi}-1}-\frac{1}{2n\xi}\right)\\
&-\sum_{m=1}^{k-1} \frac{B_{2m}^2}{(2m)! 2m} \xi^{2m-1}
+\sum_{m=1}^{k-1} \frac{B_{2m}}{(2m)!} \xi^{2m-1} f^{(2m-1)}(2n\xi)\\
&+O\left( \frac{B_{2k}^2}{(2k)!2k} \xi^{2k-1}\right)
 +O\left( \frac{|B_{2k}|}{(2k)!} \xi^{2k-1} f^{(2k-1)}(2n\xi) \right).
\end{split}
\]
Taking $n \to \infty$,
\[
\sum_{m=1}^\infty \frac{1}{e^{m\xi}-1} - \frac{\gamma}{\xi}=-\frac{\log \xi}{\xi}
+\frac{1}{4}-\sum_{m=1}^{k-1} \frac{B_{2m}^2}{(2m)! 2m} \xi^{2m-1}+O\left( \frac{B_{2k}^2}{(2k)!2k} \xi^{2k-1}\right).
\]

\section{Voronoi summation formula}
The Voronoi summation formula  \cite[p.~182]{cohen} states that if $f:\mathbb{R} \to \mathbb{C}$
is a Schwartz function, then
\begin{align*}
\sum_{n=1}^\infty d(n) f(n)&=\int_0^\infty f(t)(\log t + 2\gamma) dt + \frac{f(0)}{4}\\
&+\sum_{n=1}^\infty d(n) \int_0^\infty f(t)(4K_0(4\pi(nt)^{1/2})-2\pi Y_0(4\pi(nt)^{1/2})) dt,
\end{align*}
where $K_0$ and $Y_0$ are Bessel functions. 

Let $0<x<1$. For $f(t)=e^{-tx}$, we compute 
\[
\begin{split}
&\int_0^\infty f(t)(4K_0(4\pi(nt)^{1/2})-2\pi Y_0(4\pi(nt)^{1/2})) dt\\
=&
-\frac{2}{x}\exp\left(\frac{4\pi^2 n}{x}\right) \Ei\left(-\frac{4\pi^2 n}{x}\right)
-\frac{2}{x}\exp\left(-\frac{4\pi^2 n}{x}\right) \Ei\left(\frac{4\pi^2 n}{x}\right),
\end{split}
\]
where
\[
\Ei(x) = -\int_{-x}^\infty \frac{e^{-t}}{t} dt, \qquad x \neq 0,
\]
the exponential integral.
Then the Voronoi summation formula yields
\[
\begin{split}
&\sum_{n=1}^\infty d(n) e^{-nx}\\
=& \frac{\gamma}{x} - \frac{\log x}{x} + \frac{1}{4}\\
&+ \sum_{n=1}^\infty d(n)
\left(
-\frac{2}{x}\exp\left(\frac{4\pi^2 n}{x}\right) \Ei\left(-\frac{4\pi^2 n}{x}\right)
-\frac{2}{x}\exp\left(-\frac{4\pi^2 n}{x}\right) \Ei\left(\frac{4\pi^2 n}{x}\right)
\right).
\end{split}
\]

Egger and Steiner \cite{egger} give a proof  of the Voronoi summation formula involving Lambert series.

Kluyver \cite{kluyver1919} and \cite{kluyver1922}

Guinand \cite{guinand}

\section{Curtze}
Curtze \cite{curtze}

\section{Laguerre}
Laguerre \cite{laguerre}

\section{V. A. Lebesgue}
V. A. Lebesgue \cite{lebesgue}:

\section{Bouniakowsky}
Bouniakowsky \cite{bouniakowsky}

\section{Chebyshev}
Chebyshev \cite{tchebychef}

\section{Catalan}
Catalan \cite{catalan1842}

Catalan \cite[p.~89]{catalan1873}

Catalan \cite[p.~119, \S CXXIV]{melanges1886} and \cite[pp.~38--39, \S CCXXVI]{melanges1888}

\section{Pincherle}
Pincherle \cite{pincherle}

\section{Glaisher}
Glaisher \cite[p.~163]{glaisher}

\section{G\"unther}
G\"unther \cite[p.~83]{gunther} and \cite[p.~178]{gunther1881}

\section{Stieltjes}
Stieltjes \cite{stieltjes}

cf. Zhang \cite{zhang}

\section{Rogel}
Rogel \cite{rogel} and \cite{rogel1891}

\section{Ces\`aro}
Ces\`aro \cite{cesaro1886}

Ces\`aro \cite{cesaro1888}

Ces\`aro \cite{cesaro1893} and \cite[pp.~181--184]{cesaro1894}

Bromwich \cite[p.~201, Chapter VIII, Example B, 35]{bromwich}

\section{de la Vall\'ee-Poussin}
de la Vall\'ee-Poussin \cite{valleepoussin}

\section{Torelli}
Torelli \cite{torelli}

\section{Fibonacci numbers}
Landau \cite{fibonacci}

\section{Knopp}
Knopp \cite{knopp1913}

\section{Generating functions}
Hardy and Wright \cite[p.~258, Theorem 307]{wright}:

\begin{theorem}
For $f(s)=\sum_{n=1}^\infty a_n n^{-s}$ and $g(s)=\sum_{n=1}^\infty b_n n^{-s}$, 
\[
\sum_{n=1}^\infty a_n \frac{x^n}{1-x^n} = \sum_{n=1}^\infty b_n x^n, \qquad |x|<1,
\]
if and only if there is some $\sigma$ such that
\[
\zeta(s) f(s) = g(s), \qquad \Re(s)>\sigma.
\]
\end{theorem}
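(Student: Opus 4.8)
The plan is to show that each of the two stated conditions is equivalent to the single arithmetic relation
\[
b_N = \sum_{d \mid N} a_d, \qquad N \geq 1,
\]
which is the Dirichlet convolution of the constant sequence $1$ (whose Dirichlet series is $\zeta$) with $(a_n)$. Once both the power-series identity and the Dirichlet-series identity have been reduced to this common relation, the equivalence is immediate, and the role of $\zeta$ is exactly to convert multiplication into summation over divisors, matching the Lambert expansion.

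First I would treat the power-series side. Writing $\frac{x^n}{1-x^n} = \sum_{m=1}^\infty x^{nm}$ for $|x|<1$ and regrouping by the exponent $N = nm$,
\[
\sum_{n=1}^\infty a_n \frac{x^n}{1-x^n} = \sum_{n=1}^\infty \sum_{m=1}^\infty a_n x^{nm} = \sum_{N=1}^\infty \Big( \sum_{d \mid N} a_d \Big) x^N.
\]
This regrouping is a rearrangement of a double series, so I must justify absolute convergence. Here I would use that $f(s)$ converges as a Dirichlet series at some point $s_0$, which forces $a_n = O(n^{\Re s_0})$; polynomial growth of $(a_n)$ gives $\sum_{n,m} |a_n|\,|x|^{nm} < \infty$ for every $|x|<1$. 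With the rearrangement licensed, the left-hand side is a genuine power series whose coefficient of $x^N$ is $\sum_{d\mid N} a_d$, and by uniqueness of power-series coefficients the Lambert identity holds on $|x|<1$ if and only if $b_N = \sum_{d\mid N} a_d$ for all $N$.

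Next I would treat the Dirichlet-series side. On a half-plane $\Re(s) > \sigma_1$ where both $\zeta(s)$ and $f(s)$ converge absolutely (take $\sigma_1 = \max(1,\sigma_a(f))$), the product of two absolutely convergent Dirichlet series is the Dirichlet series of the convolution of their coefficient sequences, and since every coefficient of $\zeta$ equals $1$,
\[
\zeta(s) f(s) = \Big( \sum_{m=1}^\infty m^{-s} \Big)\Big( \sum_{n=1}^\infty a_n n^{-s} \Big) = \sum_{N=1}^\infty \Big( \sum_{d\mid N} a_d \Big) N^{-s}.
\]
Thus $\zeta(s) f(s) = g(s)$ on some half-plane asserts that two Dirichlet series agree there; by the uniqueness theorem for Dirichlet series (one vanishing along a real sequence $s_k \to +\infty$ has all coefficients zero, seen by isolating the least surviving coefficient and letting $s\to\infty$) their coefficients coincide, giving $\sum_{d\mid N} a_d = b_N$ for all $N$. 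Conversely, if this relation holds, then $\zeta f$ and $g$ have identical coefficients and hence agree wherever both converge, supplying the required $\sigma$; note also that $b_N = \sum_{d\mid N} a_d$ inherits polynomial growth, so $\sum b_n x^n$ converges for $|x|<1$ and the backward passage to the Lambert identity is legitimate.

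I expect the main obstacle to be the convergence bookkeeping rather than the algebra: one must extract polynomial growth of $(a_n)$ from convergence of $f$ in order to rearrange the Lambert double series, and one must invoke the uniqueness theorem for Dirichlet series in the correct direction to pass from the functional equality $\zeta f = g$ back to the coefficient relation. The combinatorial content is identical on both sides, which is precisely what makes the two conditions equivalent.
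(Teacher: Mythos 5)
Your proof is correct and follows essentially the same route as the source the paper cites for this statement (Hardy and Wright, Theorem 307), which the paper itself does not reprove: both conditions are reduced to the common coefficient relation $b_N = \sum_{d \mid N} a_d$, via the Lambert-series rearrangement on one side (this is exactly the paper's own Lemma \ref{rearrangement}, stated later and used there for $\mu$, $\phi$, $\lambda$, $\Lambda$) and multiplication plus uniqueness of absolutely convergent Dirichlet series on the other. Your convergence bookkeeping --- polynomial growth of $(a_n)$ extracted from convergence of $f$ at a single point, inherited by $(b_n)$ --- is precisely what licenses both rearrangements, so nothing is missing.
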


For $f(s)=\sum_{n=1}^\infty \mu(n) n^{-s}$ and $g(s)=1$,
using \cite[p.~250, Theorem 287]{wright}
\[
\frac{1}{\zeta(s)} = \sum_{n=1}^\infty \mu(n) n^{-s}, \qquad \Re(s)>1,
\]
we get
\begin{equation}
\sum_{n=1}^\infty  \frac{\mu(n) x^n}{1-x^n} = x.
\label{mobiusgenerating}
\end{equation}

For $f(s) = \sum_{n=1}^\infty \phi(n) n^{-s}$ and 
\[
g(s)=\zeta(s-1)=\sum_{n=1}^\infty n^{-s+1} = \sum_{n=1}^\infty n n^{-s},
\]
using \cite[p.~250, Theorem 288]{wright}
\[
\frac{\zeta(s-1)}{\zeta(s)} = \sum_{n=1}^\infty  \phi(n) n^{-s}, \qquad \Re(s)>2,
\]
we get
\[
\sum_{n=1}^\infty  \frac{\phi(n) x^n}{1-x^n} = \sum_{n=1}^\infty nx^n =
\frac{x}{(1-x)^2}.
\]

For $n=p_1^{a_1} \cdots p_r^{a_r}$, define $\Omega(n) = a_1+\cdots+a_n$ and 
\[
\lambda(n)=(-1)^{\Omega(n)}.
\]
For $f(s)= \sum_{n=1}^\infty \lambda(n) n^{-s}$ and
\[
g(s)=\zeta(2s) = \sum_{n=1}^\infty n^{-2s} = \sum_{n=1}^\infty (n^2)^{-s},
\]
using \cite[p.~255, Theorem 300]{wright}
\[
\frac{\zeta(2s)}{\zeta(s)} = \sum_{n=1}^\infty \lambda(n) n^{-s}, \qquad \Re(s)>1,
\]
we get
\[
\sum_{n=1}^\infty \frac{\lambda(n) x^n}{1-x^n} = \sum_{n=1}^\infty x^{n^2}.
\]

We define the \textbf{von Mangoldt function} $\Lambda:\mathbb{N} \to \mathbb{R}$ by
$\Lambda(n)=\log p$ if $n$ is some positive integer power of a prime $p$, and $\Lambda(n)=0$ otherwise. For example,
$\Lambda(1)=0$, $\Lambda(12)=0$, $\Lambda(125)=\log 5$. It is a fact \cite[p.~254, Theorem 296]{wright} that for any $n$, the von Mangoldt function satisfies
\begin{equation}
\sum_{m | n} \Lambda(m) = \log n.
\label{mangoldtsum}
\end{equation}

For $f(s)=\sum_{n=1}^\infty \Lambda(n) n^{-s}$ and
\[
g(s) = -\zeta'(s) = \sum_{n=1}^\infty \log n n^{-s},
\]
using \cite[p.~253, Theorem 294]{wright}
\[
-\frac{\zeta'(s)}{\zeta(s)} = \sum_{n=1}^\infty \Lambda(n) n^{-s},
\]
we obtain
\[
\sum_{n=1}^\infty \frac{\Lambda(n) x^n}{1-x^n} = \sum_{n=1}^\infty \log n x^n.
\]

\section{Mertens}
For $\Re s>1$, we define
\[
P(s) = \sum_p \frac{1}{p^s}.
\]
We also 
define
\[
H = \sum_{m=2}^\infty \sum_p \frac{1}{mp^m}.
\]

Mertens \cite{mertens} proves the following.

\begin{theorem}
As $\varrho \to 0$,
\[
P(1+\rho) = \log \left( \frac{1}{\rho} \right)-H+o(1).
\]
\end{theorem}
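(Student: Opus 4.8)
The plan is to start from the Euler product for the Riemann zeta function. For $\Re s>1$ one has the absolutely convergent logarithmic expansion
\[
\log\zeta(s) = \sum_p \sum_{m=1}^\infty \frac{1}{m p^{ms}}.
\]
Grouping by $m$ and recognizing the inner sum over primes as $P(ms)$, I would rewrite this as
\[
\log\zeta(s) = \sum_{m=1}^\infty \frac{1}{m} P(ms) = P(s) + \sum_{m=2}^\infty \frac{1}{m} P(ms),
\]
so that
\[
P(s) = \log\zeta(s) - \sum_{m=2}^\infty \frac{1}{m} P(ms).
\]
Setting $s=1+\rho$, the theorem will follow once I control the two terms on the right as $\rho\to 0^+$.

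For the first term I would invoke the standard fact that $\zeta$ has a simple pole at $s=1$ with residue $1$, so that $(s-1)\zeta(s)\to 1$ as $s\to 1$. Taking logarithms gives $\log\zeta(s)+\log(s-1)\to 0$, that is, with $s-1=\rho$,
\[
\log\zeta(1+\rho) = \log\frac{1}{\rho} + o(1).
\]

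For the second term, the key observation is that the tail $\sum_{m=2}^\infty \frac{1}{m}P(ms)$ is continuous at $s=1$. For real $s\geq 1$ and $m\geq 2$ one has $P(ms)\leq P(m)$, and since $\sum_{m=2}^\infty \frac{1}{m}P(m)=H<\infty$ (this is precisely the definition of $H$ after interchanging the order of summation), the Weierstrass $M$-test gives uniform convergence on $s\geq 1$. Hence the sum is continuous there and
\[
\sum_{m=2}^\infty \frac{1}{m}P(ms) \longrightarrow \sum_{m=2}^\infty \frac{1}{m}P(m) = H
\]
as $\rho\to 0^+$. Combining the two limits yields $P(1+\rho)=\log\frac{1}{\rho}-H+o(1)$.

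The main obstacle is the careful justification of the rearrangements and the continuity of the tail. I need absolute convergence of the double sum defining $\log\zeta(s)$ in order to group the terms by $m$, and I need the uniform bound $P(ms)\leq P(m)$ together with the finiteness of $H$ to apply the $M$-test. Neither step is deep, but both must be stated with care because $P(s)$ itself diverges as $s\to 1^+$: the whole content of the theorem is that only the $m=1$ term carries the singularity $\log\frac{1}{\rho}$, while the $m\geq 2$ tail remains bounded and tends to the finite constant $H$.
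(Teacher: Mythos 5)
Your proposal is correct and follows essentially the same route as the paper: expand $\log\zeta(1+\varrho)$ via the Euler product, split off the $m=1$ term as $P(1+\varrho)$, let the $m\geq 2$ tail tend to $H$, and use the pole of $\zeta$ at $s=1$ to get $\log\zeta(1+\varrho)=\log\left(\frac{1}{\varrho}\right)+o(1)$. The only difference is cosmetic: you justify the tail's convergence to $H$ explicitly (via the bound $P(ms)\leq P(m)$ and the Weierstrass $M$-test), a step the paper asserts without detail, while the paper uses the slightly sharper expansion $\log\zeta(1+\varrho)=\log\left(\frac{1}{\varrho}\right)+\gamma\varrho+O(\varrho^2)$, which is more than the $o(1)$ statement requires.
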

\begin{proof}
As $\varrho \to 0$,
\[
\zeta(1+\varrho) = \frac{1}{\varrho}+\gamma+O(\varrho) = \frac{1}{\varrho}(1+\gamma \varrho + O(\varrho^2)).
\]
Taking the logarithm,
\begin{equation}
\log \zeta(1+\varrho) = \log \left(\frac{1}{\varrho} \right) + \log(1+\gamma \varrho + O(\varrho^2))
= \log \left(\frac{1}{\varrho} \right)  + \gamma \varrho+O(\varrho^2).
\label{logzetaeq}
\end{equation}
On the other hand, for $\varrho>0$,
\[
\zeta(1+\varrho) = \prod_p \frac{1}{1-\frac{1}{p^{1+\varrho}}},
\]
and taking the logarithm,
\begin{align*}
\log \zeta(1+\varrho) &= -  \sum_p \log\left(1-\frac{1}{p^{1+\varrho}}\right)\\
&=\sum_p \sum_{m=1}^\infty \frac{1}{mp^{m(1+\varrho)}}\\
&=P(1+\rho) + \sum_{m=2}^\infty \sum_p  \frac{1}{mp^{m(1+\varrho)}}.
\end{align*}

Then as $\varrho \to 0$,
\[
\log \zeta(1+\varrho)  = P(1+\varrho) + H + o(1).
\]
Combining this with \eqref{logzetaeq} we get that as $\varrho \to 0$,
\[
P(1+\rho) = \log \left( \frac{1}{\rho} \right)-H+o(1).
\]
\end{proof}

Mertens \cite{mertens} also proves that for any $x$ there is some 
\[
|\delta|< \frac{4}{\log(x+1)} + \frac{2}{x\log x}
\]
such that
\[
\sum_{p \leq x} \frac{1}{p} = \log \log x + \gamma - H + \delta.
\]
Thus,
\[
\sum_{p \leq x} \frac{1}{p} = \log \log x + \gamma - H + O\left( \frac{1}{\log x} \right).
\]

Mertens shows that 
\[
H=-\sum_{n=2}^\infty \mu(n) \frac{\log \zeta(n)}{n}.
\]
This can be derived using \eqref{mobiusgenerating}, and we do this now; see \cite{lindqvist}.

\begin{lemma}
For $\Re s>1$,
\[
\frac{1}{s} \log \zeta(s) = \int_2^\infty \frac{\pi(t) dt}{t(t^s-1)}.
\]
\label{logzetalemma}
\end{lemma}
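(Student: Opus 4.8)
The plan is to start from the Euler product and turn the resulting sum over primes into an integral against the prime-counting function by partial summation. For $\Re s>1$ the Euler product gives $\zeta(s)=\prod_p(1-p^{-s})^{-1}$, and taking logarithms yields
\[
\log\zeta(s)=-\sum_p \log(1-p^{-s}).
\]
Writing $g(t)=-\log(1-t^{-s})$ and recalling that $\pi(t)$ is a step function jumping by $1$ at each prime, this sum is exactly the Riemann--Stieltjes integral $\sum_p g(p)=\int_{2^-}^\infty g(t)\,d\pi(t)$, since the smallest prime is $2$ and $\pi(t)=0$ for $t<2$.

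Next I would integrate by parts. Partial summation gives
\[
\int_{2^-}^\infty g(t)\,d\pi(t)=\bigl[g(t)\pi(t)\bigr]_{2^-}^\infty-\int_2^\infty \pi(t)g'(t)\,dt.
\]
Differentiating, $g'(t)=\dfrac{s\,t^{-s-1}}{1-t^{-s}}$, and multiplying numerator and denominator by $t^s$ collapses this to $g'(t)=\dfrac{s}{t(t^s-1)}$. Granting that the boundary term vanishes, one obtains
\[
\log\zeta(s)=-\int_2^\infty \pi(t)g'(t)\,dt=s\int_2^\infty \frac{\pi(t)}{t(t^s-1)}\,dt,
\]
and dividing by $s$ produces the claimed identity.

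The main point to check carefully is the vanishing of the boundary term at infinity together with the convergence of the integral, both of which rest only on a crude estimate for $\pi$. At the lower endpoint $\pi(2^-)=0$, so that contribution drops out. At infinity, $g(t)=-\log(1-t^{-s})=O(t^{-\Re s})$, while the trivial bound $\pi(t)\le t$ gives $|g(t)\pi(t)|=O(t^{1-\Re s})$, which tends to $0$ precisely because $\Re s>1$; the same bound makes $\int_2^\infty \frac{\pi(t)}{t(t^s-1)}\,dt$ absolutely convergent. Thus no deep input on the distribution of primes is required --- only the elementary estimate $\pi(t)\le t$ and the hypothesis $\Re s>1$, which is exactly the half-plane in which the Euler product is valid.
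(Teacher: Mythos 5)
Your argument is sound and, modulo one sign slip, complete; it also takes a mildly but genuinely different route from the paper's. First the slip: with $g(t)=-\log(1-t^{-s})$ one has
\[
g'(t)=-\frac{s\,t^{-s-1}}{1-t^{-s}}=-\frac{s}{t(t^s-1)},
\]
not $+\frac{s}{t(t^s-1)}$ as you state; fortunately your final display, $\log\zeta(s)=-\int_2^\infty \pi(t)g'(t)\,dt=s\int_2^\infty \frac{\pi(t)}{t(t^s-1)}\,dt$, is the one consistent with the correct (negative) derivative, so fixing the sign restores internal consistency without changing the conclusion. As for the comparison: the paper likewise starts from the Euler product, but instead of partial summation it proves, for each prime $p$ and $\Re s>0$, the identity $s\int_p^\infty \frac{dt}{t(t^s-1)}=\log\frac{1}{1-p^{-s}}$ by expanding $\frac{1}{t(t^s-1)}=\sum_{n\geq 0}t^{-ns-s-1}$ and integrating term by term, and then interchanges the sum over primes with the integral, using the fact that summing $1$ over primes $p\leq t$ produces $\pi(t)$. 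The two proofs are dual: the paper verifies the antiderivative relation $g(p)=s\int_p^\infty \frac{dt}{t(t^s-1)}$ directly, so it never differentiates and never meets a boundary term, whereas your Riemann--Stieltjes integration by parts forces you to check that $\pi(t)g(t)=O(t^{1-\Re s})\to 0$, which is exactly where $\pi(t)\leq t$ and $\Re s>1$ enter. Your version has the merit of making the role of the hypothesis $\Re s>1$ explicit (it kills the boundary term and gives absolute convergence of the integral); the paper's version isolates a cleaner per-prime identity valid already for $\Re s>0$, with $\Re s>1$ needed only for the Euler product and the sum--integral interchange.
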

\begin{proof}
For $p$ prime and $\Re s>0$,
\begin{align*}
\int_p^\infty \frac{dt}{t(t^s-1)}&=
\int_p^\infty t^{-s-1}\frac{1}{1-t^{-s}} dt\\
&=\int_p^\infty t^{-s-1} \sum_{n=0}^\infty (t^{-s})^n dt\\
&=\sum_{n=0}^\infty \int_p^\infty t^{-ns-s-1} dt\\
&=\sum_{n=0}^\infty \frac{t^{-ns-s}}{-ns-s} \bigg|_p^\infty\\
&=\frac{1}{s} \sum_{n=1}^\infty \frac{p^{-ns}}{n}\\
&=-\frac{1}{s} \log(1-p^{-s}),
\end{align*}
hence
\[
 \log \left( \frac{1}{1-p^{-s}} \right) = s  \int_p^\infty \frac{dt}{t(t^s-1)}.
\]
On the one hand,
\[
\sum_p  \int_p^\infty \frac{dt}{t(t^s-1)} = \int_2^\infty \frac{\pi(t) dt}{t(t^s-1)}.
\]
On the other hand, for $\Re s>1$ we have
\[
\sum_p  \log \left( \frac{1}{1-p^{-s}} \right) = \log \prod_p  \left( \frac{1}{1-p^{-s}} \right)
=\log \zeta(s). 
\]
Combining these, for $\Re s>1$,
\[
\frac{1}{s} \log \zeta(s) = \int_2^\infty \frac{\pi(t) dt}{t(t^s-1)}.
\]
\end{proof}

\begin{theorem}
\[
H=-\sum_{n=2}^\infty \mu(n) \frac{\log \zeta(n)}{n}.
\]
\end{theorem}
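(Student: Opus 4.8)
The plan is to first turn the Lambert series \eqref{mobiusgenerating} into the classical formula expressing $P(s)$ through $\log\zeta$, and then to feed this into the definition of $H$. Dividing \eqref{mobiusgenerating} by $x$ gives $\sum_{n=1}^\infty \frac{\mu(n)}{n}\cdot\frac{n x^{n-1}}{1-x^n} = 1$ for $0<x<1$, and since $\frac{d}{dx}\bigl(-\log(1-x^n)\bigr) = \frac{n x^{n-1}}{1-x^n}$ while every term vanishes at $x=0$, integrating term by term from $0$ to $x$ yields
\[
-\sum_{n=1}^\infty \frac{\mu(n)}{n}\log(1-x^n) = x, \qquad |x|<1.
\]
Equivalently, expanding $-\log(1-x^n)=\sum_{j\ge1}\frac{x^{nj}}{j}$ and collecting the coefficient of $x^N$ reproduces $\frac{1}{N}\sum_{d\mid N}\mu(d)=[N=1]$, so this identity holds for all $|x|<1$.

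Next I would set $x=p^{-s}$ for each prime $p$ and sum over $p$. Since $\log\zeta(ns)=-\sum_p\log(1-p^{-ns})$ for $\Re s>1$, absolute convergence lets me interchange the sums over $p$ and $n$, giving the key identity
\[
P(s)=\sum_{n=1}^\infty \frac{\mu(n)}{n}\log\zeta(ns), \qquad \Re s>1.
\]
Now recall from the definitions that $H=\sum_{m=2}^\infty\sum_p\frac{1}{mp^m}=\sum_{m=2}^\infty\frac{P(m)}{m}$. Substituting the identity above (legitimate because each $m\ge2$ keeps the argument $km\ge2$ away from the pole of $\zeta$ at $1$) I would write
\[
H=\sum_{m=2}^\infty\frac1m\sum_{k=1}^\infty\frac{\mu(k)}{k}\log\zeta(km)
\]
and regroup by $N=km$. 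Using $\frac{1}{m}\cdot\frac{\mu(k)}{k}=\frac{\mu(k)}{N}$ and noting that the condition $m\ge2$ amounts to $k$ being a proper divisor of $N$,
\[
H=\sum_{N=2}^\infty\frac{\log\zeta(N)}{N}\sum_{\substack{k\mid N\\ k\ne N}}\mu(k).
\]
Finally, because $\sum_{k\mid N}\mu(k)=0$ for $N\ge2$, the inner sum equals $-\mu(N)$, which gives the claimed $H=-\sum_{N=2}^\infty\frac{\mu(N)\log\zeta(N)}{N}$.

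The main obstacle is justifying the rearrangement of the double series: I must establish absolute convergence before interchanging and regrouping. This follows from the bound $\log\zeta(s)\le\zeta(s)-1\le C\,2^{-s}$ for $s\ge2$, which dominates $\sum_{m\ge2}\frac1m\sum_{k\ge1}\frac1k\log\zeta(km)$ by a convergent series; the remainder is bookkeeping. The one genuinely delicate point is to keep the summation index $m$ bounded below by $2$ throughout, so that the divergent $\zeta(1)$ term — which would otherwise appear as the $k=N$ contribution — is never invoked. It is precisely the omission of that term which converts the vanishing relation $\sum_{k\mid N}\mu(k)=0$ into the factor $-\mu(N)$ and produces the stated formula.
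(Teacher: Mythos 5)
Your proof is correct, and it takes a genuinely different route from the paper's. The paper works through an integral representation: it first proves (Lemma \ref{logzetalemma}) that $\frac{1}{s}\log\zeta(s)=\int_2^\infty\frac{\pi(t)\,dt}{t(t^s-1)}$, then writes $H=\int_2^\infty\frac{\pi(t)\,dt}{t^2(t-1)}$, and finally substitutes $x=t^{-1}$ into the rearranged identity $\frac{x^2}{1-x}=-\sum_{n\ge2}\frac{\mu(n)x^n}{1-x^n}$ inside the integrand, integrating term by term and invoking the lemma at integer arguments. You never introduce $\pi(t)$ or any integral: you integrate \eqref{mobiusgenerating} to obtain $-\sum_n\frac{\mu(n)}{n}\log(1-x^n)=x$, specialize at $x=p^{-s}$ and sum over primes to get the classical M\"obius-inversion formula $P(s)=\sum_{k\ge1}\frac{\mu(k)}{k}\log\zeta(ks)$, then expand $H=\sum_{m\ge2}P(m)/m$ and regroup the double series by $N=km$, using $\sum_{d\mid N}\mu(d)=0$ for $N\ge2$. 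The two arguments exploit the same underlying mechanism --- deleting the $n=1$ term of a M\"obius sum and flipping the sign (the paper does this when it moves $\frac{x^2}{1-x}$ to one side; you do it when the proper-divisor sum becomes $-\mu(N)$) --- but the machinery differs. The paper's route buys a lemma of independent interest tying $\log\zeta$ to the prime-counting function, very much in the spirit of the Mertens section; your route is more elementary (pure series manipulation, with convergence controlled by the easy bound $\log\zeta(s)\le\zeta(s)-1=O(2^{-s})$ for $s\ge2$) and yields the formula for $P(s)$ as a reusable byproduct. Your convergence bookkeeping is sound: the absolute convergence of the double sum justifies both the interchange over $p$ and $k$ and the regrouping by $N$, and you are right that keeping $m\ge2$ is what keeps $\zeta(1)$ out of the picture.
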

\begin{proof}
For any prime $p$ and for $m \geq 1$,
\[
\int_p^\infty t^{-m-1} dt  = 
\frac{t^{-m}}{-m} \bigg|_p^\infty =
\frac{1}{mp^m}, 
\]
and using this we have
\begin{align*}
H&=\sum_{m=2}^\infty \sum_p \frac{1}{mp^m}\\
&=\sum_{m=2}^\infty \sum_p \int_p^\infty t^{-m-1} dt \\
&=\sum_{m=2}^\infty \int_2^\infty \pi(t) t^{-m-1} dt  \\
&=\int_2^\infty \pi(t) \left( \sum_{m=2}^\infty t^{-m-1}\right) dt\\
&=\int_2^\infty \pi(t) \frac{1}{t^2(t-1)}  dt
\end{align*}

Rearranging \eqref{mobiusgenerating}, 
\[
\frac{x^2}{1-x}=
-\sum_{n=2}^\infty  \frac{\mu(n) x^n}{1-x^n}.
\]
With $x=t^{-1}$,
\[
\frac{1}{t(t-1)} = -\sum_{n=2}^\infty \frac{\mu(n)}{t^n-1},
\]
so
\[
\frac{1}{t^2(t-1)} = -\sum_{n=2}^\infty \frac{\mu(n)}{t(t^n-1)}.
\]
Thus we have
\[
H=-\int_2^\infty \pi(t) \left( \sum_{n=2}^\infty \frac{\mu(n)}{t(t^n-1)}\right) dt
=-\sum_{n=2}^\infty \mu(n) \int_2^\infty \frac{\pi(t) dt}{t(t^n-1)} dt.
\]
Using  Lemma \ref{logzetalemma} for $s=2,3,4,\ldots$,
\[
H=-\sum_{n=2}^\infty \mu(n) \cdot \frac{1}{n} \log \zeta(n),
\]
completing the proof.
\end{proof}

\section{Preliminaries on prime numbers}
We define
\[
\vartheta(x) = \sum_{p \leq x} \log p = \log \prod_{p \leq x} p
\]
and
\[
\psi(x) = \sum_{p^m \leq x} \log p = \sum_{n \leq x} \Lambda(n).
\]
One sees that
\[
\psi(x) = \sum_{p \leq x} [\log_p x] \log p = 
\sum_{p \leq x} \left[ \frac{\log x}{\log p}\right] \log p.
\]
As well,
\begin{equation}
\psi(x)
= 
\sum_{m=1}^\infty \sum_{p \leq x^{1/m}} \log p
=
\sum_{m=1}^\infty \vartheta(x^{1/m});
\label{thetasum}
\end{equation}
there are only finitely many terms on the right-hand side, as $\vartheta(x^{1/m})=0$ if
$x<2^m$.

\begin{theorem}
\[
\psi(x) = \vartheta(x)+O(x^{1/2} (\log x)^2).
\]
\label{psitheta}
\end{theorem}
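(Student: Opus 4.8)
The plan is to exploit the decomposition \eqref{thetasum}, which already isolates $\vartheta(x)$ as its $m=1$ term. Subtracting that term off gives
\[
\psi(x) - \vartheta(x) = \sum_{m=2}^\infty \vartheta(x^{1/m}),
\]
so the entire task reduces to showing that this tail sum is $O(x^{1/2}(\log x)^2)$.

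First I would count the nonzero terms. Since $\vartheta(y)=0$ whenever $y<2$, the summand $\vartheta(x^{1/m})$ vanishes as soon as $x^{1/m}<2$, that is, as soon as $m>\frac{\log x}{\log 2}$. Hence there are at most $\frac{\log x}{\log 2}=O(\log x)$ nonzero terms in the sum.

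Next I would bound each surviving term by the largest one. Because $\vartheta$ is nondecreasing and $x^{1/m}\le x^{1/2}$ for every $m\ge 2$, each summand satisfies $\vartheta(x^{1/m})\le \vartheta(x^{1/2})$. It then remains to estimate $\vartheta(x^{1/2})$ crudely: from $\vartheta(y)=\sum_{p\le y}\log p\le \pi(y)\log y\le y\log y$, applied at $y=x^{1/2}$, we obtain $\vartheta(x^{1/2})\le \tfrac12 x^{1/2}\log x=O(x^{1/2}\log x)$.

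Combining the two estimates, the tail sum is at most (number of terms) times (size of the largest term), namely $O(\log x)\cdot O(x^{1/2}\log x)=O(x^{1/2}(\log x)^2)$, which is precisely the claimed error. The argument has no genuine obstacle: the only quantitative input is the trivial bound $\vartheta(y)=O(y\log y)$, and the second logarithm in the error term arises entirely from the $O(\log x)$ terms being summed. The one point deserving care is to apply the crude bound on $\vartheta$ at the argument $x^{1/2}$ rather than at $x$, since it is the monotonicity of $\vartheta$ that collapses all the higher terms $m\ge 2$ onto the single dominant value $\vartheta(x^{1/2})$.
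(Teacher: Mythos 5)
Your proof is correct and follows essentially the same route as the paper: both isolate the tail $\sum_{m\ge 2}\vartheta(x^{1/m})$ from \eqref{thetasum}, note it has $O(\log x)$ nonzero terms, and control it with the trivial bound $\vartheta(y)\le y\log y$. The only cosmetic difference is that you bound every term by the largest one, $\vartheta(x^{1/2})$, via monotonicity, whereas the paper applies the crude bound termwise as $x^{1/m}\frac{\log x}{m}\le x^{1/2}\log x\cdot\frac1m$ and then sums over $m$; both yield the same $O(x^{1/2}(\log x)^2)$.
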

\begin{proof}
For $x \geq 2$,
$\vartheta(x)<x \log x$, giving
\begin{align*}
 \sum_{2 \leq m 
\leq \frac{\log x}{\log 2}} \vartheta(x^{1/m})
&< \sum_{2 \leq m 
\leq \frac{\log x}{\log 2}} x^{1/m} \frac{1}{m} \log x\\
&\leq x^{1/2} \log x \sum_{2 \leq m \leq \frac{\log x}{\log 2}} \frac{1}{m}\\
&=O(x^{1/2}(\log x)^2).
\end{align*}
Thus, using \eqref{thetasum} we have
\[
\psi(x) = \vartheta(x) +  \sum_{2 \leq m 
\leq \frac{\log x}{\log 2}} \vartheta(x^{1/m})=
\vartheta(x)+O(x^{1/2}(\log x)^2).
\]
\end{proof}

We prove that if $\lim_{x \to \infty} \frac{\vartheta(x)}{x}=1$ then
$\frac{\pi(x)}{x/\log x}=1$.

\begin{theorem}
\[
\liminf_{x \to \infty} \frac{\pi(x)}{x/\log x}
=\liminf_{x \to \infty} \frac{\vartheta(x)}{x}
\]
and
\[
\limsup_{x \to \infty} \frac{\pi(x)}{x/\log x}
=\limsup_{x \to \infty} \frac{\vartheta(x)}{x}.
\]
\end{theorem}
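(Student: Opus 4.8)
The plan is to relate $\pi(x)$ and $\vartheta(x)$ by two inequalities that go in opposite directions, then divide both by $x/\log x$ and take liminf/limsup. The key observation is that $\vartheta(x) = \sum_{p \le x} \log p$ is a weighted version of $\pi(x) = \sum_{p \le x} 1$, where each prime contributes $\log p$ instead of $1$. Since $\log p \le \log x$ for every $p \le x$, I get the easy upper bound
$$
\vartheta(x) = \sum_{p \le x} \log p \le \log x \sum_{p \le x} 1 = \pi(x) \log x,
$$
which rearranges to $\frac{\vartheta(x)}{x} \le \frac{\pi(x)}{x/\log x}$.

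**Next I would** establish a lower bound on $\vartheta(x)$ in terms of $\pi(x)$, and here the standard trick is to restrict attention to the primes in the range $(x^\alpha, x]$ for some fixed $0 < \alpha < 1$. For such primes $\log p > \alpha \log x$, so
$$
\vartheta(x) \ge \sum_{x^\alpha < p \le x} \log p
> \alpha \log x \sum_{x^\alpha < p \le x} 1
= \alpha \log x \,(\pi(x) - \pi(x^\alpha)).
$$
Since $\pi(x^\alpha) \le x^\alpha$, this gives
$$
\vartheta(x) > \alpha \log x \,\pi(x) - \alpha x^\alpha \log x,
$$
and dividing by $x$,
$$
\frac{\vartheta(x)}{x} > \alpha \frac{\pi(x)}{x/\log x} - \alpha \frac{x^\alpha \log x}{x}.
$$
The subtracted term tends to $0$ as $x \to \infty$ since $\alpha < 1$.

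**Then the conclusion follows** by taking $\liminf$ and $\limsup$ through these inequalities. The upper bound gives immediately that $\limsup \frac{\vartheta(x)}{x} \le \limsup \frac{\pi(x)}{x/\log x}$ and similarly for liminf. From the lower bound, the vanishing error term yields
$$
\liminf_{x\to\infty} \frac{\vartheta(x)}{x} \ge \alpha \liminf_{x\to\infty}\frac{\pi(x)}{x/\log x}
$$
for every fixed $\alpha < 1$; letting $\alpha \uparrow 1$ removes the factor and gives the reverse inequality, and likewise for the limsup. Combining the two directions yields equality of both the liminf and the limsup, as claimed.

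**The main obstacle** I anticipate is purely bookkeeping rather than conceptual: one must be careful that the error term $\frac{x^\alpha \log x}{x}$ genuinely vanishes (it does, being $O(x^{\alpha - 1} \log x)$ with $\alpha < 1$), and that the step of letting $\alpha \to 1$ is applied to the liminf/limsup inequalities correctly — the factor $\alpha$ multiplies a quantity that is already a fixed limiting value, so the limit in $\alpha$ may be taken after the limit in $x$. A subtle point worth flagging is that one should confirm $\frac{\pi(x)}{x/\log x}$ has finite limsup so that multiplying by $\alpha$ and taking $\alpha \uparrow 1$ is legitimate; this is guaranteed by the elementary bound $\pi(x) = O(x/\log x)$, or alternatively the argument can be arranged to avoid needing it by working with the inequalities symmetrically.
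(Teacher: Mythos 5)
Your proposal is correct and follows essentially the same route as the paper: the upper bound $\vartheta(x) \leq \pi(x)\log x$ (which the paper reaches via $\vartheta(x) \leq \psi(x) \leq \pi(x)\log x$, while you observe $\log p \leq \log x$ directly), and the lower bound via restricting to primes in $(x^\alpha, x]$ with $\pi(x^\alpha) \leq x^\alpha$, followed by letting $\alpha \to 1$. The finiteness caveat you flag is not actually needed, since the inequality $\liminf \vartheta(x)/x \geq \alpha \liminf \pi(x)\log x/x$ for all $\alpha < 1$ forces the conclusion even when the right-hand side is infinite.
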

\begin{proof}
From \eqref{thetasum}, $\vartheta(x) \leq \psi(x)$. And,
\[
\psi(x)
=\sum_{p \leq x} \left[ \frac{\log x}{\log p}\right] \log p
\leq \sum_{p \leq x} \frac{\log x}{\log p} \log p
=\log x \sum_{p \leq x}.
\]
Hence
\[
\frac{\vartheta(x)}{x} \leq \frac{\pi(x) \log x}{x},
\]
whence
\[
\liminf_{x \to \infty} \frac{\vartheta(x)}{x}
\leq
\liminf_{x \to \infty} \frac{\pi(x)}{x/\log x}
\]
and
\[
\limsup_{x \to \infty} \frac{\vartheta(x)}{x}
\leq
\limsup_{x \to \infty} \frac{\pi(x)}{x/\log x}.
\]

Let $0<\alpha<1$. For $x>1$,
\[
\vartheta(x) = \sum_{p \leq x} \log p
\geq \sum_{x^\alpha<p \leq x} \log p
>\sum_{x^\alpha<p \leq x} \log x^\alpha
=\alpha \log x(\pi(x)-\pi(x^\alpha)).
\]
As $\pi(x^\alpha)<x^\alpha$,
\[
\vartheta(x) > \alpha \pi(x) \log x - \alpha x^\alpha \log x,
\]
i.e.,
\[
\frac{\vartheta(x)}{x} > \alpha \frac{\pi(x) \log x}{x} - \alpha \frac{\log x}{x^{1-\alpha}}.
\]
This yields 
\[
\liminf_{x \to \infty} \frac{\vartheta(x)}{x} \geq \alpha \liminf_{x \to \infty}
\frac{\pi(x) \log x}{x} - \alpha
\liminf_{x \to \infty} \frac{\log x}{x^{1-\alpha}}
=\alpha \liminf_{x \to \infty}
\frac{\pi(x) \log x}{x}
\]
and
\[
\limsup_{x \to \infty} \frac{\vartheta(x)}{x} 
\geq 
\alpha \limsup_{x \to \infty}
\frac{\pi(x) \log x}{x} - \alpha
\limsup_{x \to \infty} \frac{\log x}{x^{1-\alpha}}
=\alpha \limsup_{x \to \infty}
\frac{\pi(x) \log x}{x}.
\]
Since these are true for  all $0<\alpha<1$, we obtain respectively
\[
\liminf_{x \to \infty} \frac{\vartheta(x)}{x} \geq 
\liminf_{x \to \infty}
\frac{\pi(x) \log x}{x}
\]
and
\[
\limsup_{x \to \infty} \frac{\vartheta(x)}{x} \geq 
\limsup_{x \to \infty}
\frac{\pi(x) \log x}{x}.
\]
\end{proof}

\section{Wiener's tauberian theorem}
Wiener \cite[Chapter~III]{wiener}.

Wiener-Ikehara \cite{chandrasekharan148}

Rudin \cite[p.~229, Theorem 9.7]{rudin}

We say that a function $s:(0,\infty) \to \mathbb{R}$ is \textbf{slowly decreasing} if 
\[
\liminf (s(\rho v)-s(v)) \geq 0, \qquad v \to \infty, \quad \rho \to 1^+.
\]

Widder \cite[p.~211, Theorem 10b]{widder}: Wiener's tauberian theorem tells us that if
$a \in L^\infty(0,\infty)$ and is slowly decreasing and if $g \in L^1(0,\infty)$ satisfies
\[
\int_0^\infty t^{ix} g(t) dt \neq 0, \qquad x \in \mathbb{R},
\]
then
\[
\lim_{x \to \infty} \frac{1}{x} \int_0^\infty g\left(\frac{t}{x}\right) a(t) dt = A \int_0^\infty g(t) dt
\]
implies that
\[
\lim_{v \to \infty} a(v) = A.
\]

It is straightforward to check the following by rearranging summation.

\begin{lemma}
If $\sum_{n=1}^\infty a_n z^n$ has radius of convergence $\geq 1$, then for $|z|<1$,
\[
\sum_{n=1}^\infty a_n \frac{z^n}{1-z^n} = \sum_{n=1}^\infty \left( \sum_{m | n} a_m\right) z^n.
\]
\label{rearrangement}
\end{lemma}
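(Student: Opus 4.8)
The plan is to expand each summand as a geometric series and then regroup the resulting double series by the total exponent, the one genuine subtlety being the justification of the rearrangement. First I would note that for $|z|<1$ and each $n\geq 1$ we have $|z^n|<1$, so the geometric series gives
\[
\frac{z^n}{1-z^n} = \sum_{k=1}^\infty z^{nk},
\]
and hence, formally,
\[
\sum_{n=1}^\infty a_n \frac{z^n}{1-z^n} = \sum_{n=1}^\infty \sum_{k=1}^\infty a_n z^{nk}.
\]
The right-hand side is a double series in which the term $a_n z^{nk}$ contributes to the coefficient of $z^N$ precisely when $N=nk$, i.e.\ when $n$ is a divisor of $N$ and $k=N/n$. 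Therefore, once the rearrangement is legitimate, collecting terms with a common exponent $N$ yields
\[
\sum_{N=1}^\infty \left( \sum_{n \mid N} a_n \right) z^N,
\]
which is exactly the claimed identity.

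The step I expect to be the main obstacle is justifying that the double series may be summed in any order, which requires absolute convergence. I would establish this by bounding
\[
\sum_{n=1}^\infty \sum_{k=1}^\infty |a_n|\, |z|^{nk} = \sum_{n=1}^\infty |a_n| \frac{|z|^n}{1-|z|^n}.
\]
The key elementary estimate is that $1-|z|^n \geq 1-|z|$ for every $n \geq 1$ when $0 \leq |z| < 1$, so that
\[
\frac{|z|^n}{1-|z|^n} \leq \frac{|z|^n}{1-|z|},
\]
and consequently
\[
\sum_{n=1}^\infty |a_n| \frac{|z|^n}{1-|z|^n} \leq \frac{1}{1-|z|} \sum_{n=1}^\infty |a_n|\, |z|^n.
\]

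Here the hypothesis that $\sum_{n=1}^\infty a_n z^n$ has radius of convergence at least $1$ enters decisively: it guarantees that $\sum_{n=1}^\infty |a_n| r^n < \infty$ for every $r$ with $0 \leq r < 1$, and in particular for $r = |z|$. Thus the displayed double series converges absolutely, so Fubini's theorem for series (unconditional summability) permits the interchange of the order of summation and the regrouping by the exponent $N=nk$ carried out above. This completes the argument, and I would remark that the inner sum being finite for each fixed $N$ (there are only finitely many divisors of $N$) means no convergence question arises in forming the coefficients $\sum_{n \mid N} a_n$ themselves.
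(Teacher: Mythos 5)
Your proof is correct and follows exactly the route the paper indicates: the paper offers no written proof, only the remark that the lemma follows ``by rearranging summation,'' and your argument is precisely that rearrangement, expanding each term as a geometric series and regrouping by the exponent $N=nk$. Your justification of the interchange via the bound $1-|z|^n \geq 1-|z|$ and the absolute convergence of $\sum |a_n|\,|z|^n$ supplies the detail the paper leaves implicit.
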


Using Lemma \ref{rearrangement} with $a_n=\Lambda(n)$ and $z=e^{-x}$ and applying \eqref{mangoldtsum}, we get
\begin{equation}
\sum_{n=1}^\infty \Lambda(n) \frac{z^n}{1-z^n} = \sum_{n=1}^\infty \log(n) z^n.
\label{mangoldtseries}
\end{equation}
From \eqref{mangoldtseries}, and Lemma \ref{rearrangement} with $a_n=1$, we have
\[
\sum_{n=1}^\infty (\Lambda(n)-1) \frac{e^{-nx}}{1-e^{-nx}}
=\sum_{n=1}^\infty (\log n  - d(n)) e^{-nx}.
\]

We follow
Widder \cite[p.~231, Theorem 16.6]{widder}.

\begin{theorem}
As $x \to 0^+$,
\[
\sum_{n=1}^\infty (\log n  - d(n)) e^{-nx}=
 -\frac{2\gamma}{x} + O(x^{-1/2}).
\]
\label{logd}
\end{theorem}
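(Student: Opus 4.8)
The plan is to treat the two series $\sum_{n\ge 1}\log(n)e^{-nx}$ and $\sum_{n\ge 1}d(n)e^{-nx}$ separately, to observe that each carries the \emph{same} divergent main term $-\frac{\log x}{x}$ as $x\to 0^+$, and to subtract so that these terms cancel and only $-\frac{2\gamma}{x}$ survives. The divisor sum is essentially already in hand: since $\sum_{n\ge 1}d(n)z^n=\sum_{n\ge 1}\frac{z^n}{1-z^n}$, setting $z=e^{-x}$ gives $\sum_{n\ge 1}d(n)e^{-nx}=\sum_{m\ge 1}\frac{1}{e^{mx}-1}$, and the final asymptotic of the Schl\"omilch section, taken with $k=1$ and $\xi=x$, yields
\[
\sum_{n=1}^\infty d(n)e^{-nx}=\frac{\gamma}{x}-\frac{\log x}{x}+\frac14+O(x)
\]
(equivalently one may read this off the Voronoi computation, whose $\Ei$-corrections are $O(x)$ by the asymptotics of the exponential integral).

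It then remains to obtain the companion asymptotic for the logarithm sum. First I would apply Abel summation with $a_n=\log n$ and $\phi(t)=e^{-tx}$, writing
\[
\sum_{n=1}^\infty \log(n)e^{-nx}=\int_1^\infty\Big(\sum_{n\le t}\log n\Big)x e^{-tx}\,dt,
\]
the boundary contribution vanishing because $\sum_{n\le t}\log n=O(t\log t)$ while $e^{-tx}$ decays. Next I would insert Stirling's formula in the shape $\sum_{n\le t}\log n=\log [t]!=t\log t-t+O(\log t)$. The main integral $\int_0^\infty(t\log t-t)x e^{-tx}\,dt$ is evaluated by the substitution $u=tx$ together with $\int_0^\infty u e^{-u}\,du=1$ and $\int_0^\infty u\log u\,e^{-u}\,du=\Gamma'(2)=1-\gamma$, and collapses to $-\frac{\gamma}{x}-\frac{\log x}{x}$, while the $O(\log t)$ remainder contributes $O(\log(1/x))$ after the same substitution. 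Hence
\[
\sum_{n=1}^\infty \log(n)e^{-nx}=-\frac{\gamma}{x}-\frac{\log x}{x}+O(x^{-1/2}).
\]

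Subtracting the two expansions, the $-\frac{\log x}{x}$ terms cancel and the $\mp\gamma/x$ terms combine to $-2\gamma/x$, while the surviving constants and lower-order terms are absorbed into the error, giving $\sum_{n\ge1}(\log n-d(n))e^{-nx}=-\frac{2\gamma}{x}+O(x^{-1/2})$, as claimed.

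I expect the main obstacle to be the honest bookkeeping of the error in the logarithm sum: the summatory function $\sum_{n\le t}\log n$ must be controlled uniformly, and the contribution near $t=0$, where $\log t$ is singular, must be shown to be harmless under the Laplace weight $x e^{-tx}$. A conceptually cleaner but less self-contained route is to work directly with the difference, estimating $\sum_{n\le t}(\log n-d(n))=-2\gamma t+O(\sqrt t)$ from Stirling and Dirichlet's divisor estimate $\sum_{n\le t}d(n)=t\log t+(2\gamma-1)t+O(\sqrt t)$; here the $t\log t$ terms cancel already at the level of summatory functions, and the $O(\sqrt t)$ of the divisor problem is exactly what produces the stated $O(x^{-1/2})$ upon integrating against $x e^{-tx}$. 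Either way, the cancellation of $-\frac{\log x}{x}$ is the crux of the statement.
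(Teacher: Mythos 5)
Your proposal is correct, but it follows a genuinely different route from the paper's proof. The paper never computes the two asymptotic expansions separately; it works with the difference throughout. It uses the discrete summation-by-parts identity $(1-z)\sum_{n\geq 1}z^n\sum_{m\leq n}a_m=\sum_{m\geq 1}a_mz^m$ with $a_m=\log m-d(m)$ and $z=e^{-x}$, then inserts Stirling's formula together with Dirichlet's divisor estimate $\sum_{m\leq n}d(m)=n\log n+(2\gamma-1)n+O(n^{1/2})$, so that the $n\log n$ terms cancel at the level of summatory functions, leaving $\log(n!)-\sum_{m\leq n}d(m)=-2\gamma n+O(n^{1/2})$; elementary estimates for $\sum_n ne^{-nx}$ and $\sum_n n^{1/2}e^{-nx}$ then finish. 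This is precisely the alternative you sketch in your final paragraph, and, as you note, the $O(n^{1/2})$ of the divisor problem is the source of the stated $O(x^{-1/2})$. Your primary route instead splits the series: you import the expansion $\sum_n d(n)e^{-nx}=\frac{\gamma}{x}-\frac{\log x}{x}+\frac{1}{4}+O(x)$ from the Schl\"omilch section (equivalently Theorem \ref{wigert} with $N=1$) --- both derived in the paper independently of Theorem \ref{logd}, so there is no circularity --- and you handle $\sum_n \log(n)e^{-nx}$ by continuous Abel summation, Stirling, and the evaluations $\int_0^\infty ue^{-u}\,du=1$ and $\int_0^\infty u\log u\,e^{-u}\,du=\Gamma'(2)=1-\gamma$; these computations are correct, and the cancellation of the $-\frac{\log x}{x}$ terms then happens at the level of the expansions rather than at the level of the summatory functions. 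The trade-off: the paper's argument needs only the Dirichlet divisor estimate as arithmetic input, whereas yours leans on the much heavier Euler--Maclaurin (or Mellin/residue) machinery behind the divisor-series expansion; in exchange, the rest of your argument is elementary, and your conclusion is actually sharper, since your log-sum remainder is $O(\log\frac{1}{x})$ and the Schl\"omilch/$\mathrm{Ei}$ corrections are $O(x)$, so you in fact obtain $-\frac{2\gamma}{x}+O(\log\frac{1}{x})$, stronger than the stated $O(x^{-1/2})$.
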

\begin{proof}
Generally,
\begin{align*}
(1-z)\sum_{n=1}^\infty z^n \sum_{m=1}^n a_m&=
(1-z)\sum_{m=1}^\infty a_m \sum_{n=m}^\infty z^n\\
&=(1-z) \sum_{m=1}^\infty a_m \frac{z^m}{1-z}\\
&=\sum_{m=1}^\infty a_m z^m.
\end{align*}
Using this with $a_m = \log m-d(m)$ and $z=e^{-x}$ gives
\begin{align*}
\sum_{n=1}^\infty (\log n-d(n)) e^{-nx}&=
(1-e^{-x})\sum_{n=1}^\infty e^{-nx}\left( \sum_{m=1}^n \log m-\sum_{m=1}^n d(m) \right)\\
&=(1-e^{-x})\sum_{n=1}^\infty e^{-nx} \left( \log (n!) - \sum_{m=1}^n d(m) \right).
\end{align*}
Using
\[
\log(n!)  = n\log n -n + O(\log n)
\]
and
\[
\sum_{m=1}^n d(m) = n\log n + (2\gamma-1)n +O(n^{1/2}),
\]
we get
\[
\log(n!) - \sum_{m=1}^n d(m) = -2\gamma n + O(n^{1/2}).
\]
Therefore,
\[
\sum_{n=1}^\infty (\log n-d(n)) e^{-nx} =(1-e^{-x}) \sum_{n=1}^\infty e^{-nx}( -2\gamma n + O(n^{1/2})).
\]
One proves that there is some $K$ such that for all $0 \leq y < 1$,
\[
(1-y) \left( \log \frac{1}{y} \right)^{1/2} \sum_{n=1}^\infty n^{1/2} y^n \leq K,
\]
whence, with $y=e^{-x}$,
\[
\sum_{n=1}^\infty n^{1/2} e^{-nx} \leq K \frac{x^{-1/2}}{1-e^{-x}}.
\]
Also,
\[
\sum_{n=1}^\infty ne^{-nx} = \frac{e^{-x}}{(1-e^{-x})^2},
\]
and thus we have
\begin{align*}
\sum_{n=1}^\infty (\log n-d(n)) e^{-nx} &=-2\gamma \frac{e^{-x}}{1-e^{-x}}
+O(x^{-1/2})\\
&=-2\gamma \frac{1}{e^x-1} +O(x^{-1/2}).
\end{align*}
But
\[
\frac{1}{e^x-1}  = \frac{1}{x}-\frac{1}{2}+O(x),
\]
so 
\[
\sum_{n=1}^\infty (\log n-d(n)) e^{-nx} = -\frac{2\gamma}{x} + O(x^{-1/2}).
\]
\end{proof}

Define
\[
f(x) = \sum_{n=1}^\infty (\Lambda(n)-1) \frac{e^{-nx}}{1-e^{-nx}},
\]
and
\[
h(x) = \sum_{n \leq x} \frac{\Lambda(n)-1}{n},
\]
and
\[
g(t) = \frac{d}{dt} \left( \frac{te^{-t}}{1-e^{-t}} \right).
\]

First we show that $h$ is slowly decreasing.

\begin{lemma}
$h(x)$ is slowly decreasing.
\end{lemma}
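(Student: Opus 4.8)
The plan is to exploit the nonnegativity of the von Mangoldt function to control the increment of $h$. For $\rho>1$ and $v$ large I would write the difference as a sum over the newly included integers,
\[
h(\rho v) - h(v) = \sum_{v < n \leq \rho v} \frac{\Lambda(n) - 1}{n} = \sum_{v < n \leq \rho v} \frac{\Lambda(n)}{n} - \sum_{v < n \leq \rho v} \frac{1}{n}.
\]
Since $\Lambda(n) \geq 0$ for every $n$, the first sum on the right is nonnegative, so the only contribution that can make the increment negative is the smooth harmonic term. This gives the one-sided bound
\[
h(\rho v) - h(v) \geq - \sum_{v < n \leq \rho v} \frac{1}{n}.
\]

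Next I would estimate this harmonic window. Using the standard asymptotic $\sum_{n \leq x} \frac{1}{n} = \log x + \gamma + O(1/x)$, for each fixed $\rho>1$ we have
\[
\sum_{v < n \leq \rho v} \frac{1}{n} = \log(\rho v) - \log v + O(1/v) = \log \rho + O(1/v),
\]
so that $\lim_{v \to \infty} \sum_{v < n \leq \rho v} \frac{1}{n} = \log \rho$. Combining this with the inequality above yields
\[
\liminf_{v \to \infty} (h(\rho v) - h(v)) \geq -\log \rho.
\]

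Finally, letting $\rho \to 1^+$ makes $\log \rho \to 0$, so that
\[
\lim_{\rho \to 1^+} \liminf_{v \to \infty} (h(\rho v) - h(v)) \geq 0,
\]
which is precisely the statement that $h$ is slowly decreasing. I do not expect a genuine obstacle here: the entire argument rests on the single structural fact that $\Lambda \geq 0$, which confines any decrease of $h$ to the smooth term $-1/n$, whose total contribution over a logarithmically shrinking window tends to $0$. The only point requiring mild care is faithfully matching the definition of slowly decreasing, i.e.\ taking the $\liminf$ over $v$ first for fixed $\rho$ and only then sending $\rho \to 1^+$, which the computation above respects.
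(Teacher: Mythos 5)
Your proof is correct and follows essentially the same route as the paper's: both write $h(\rho v)-h(v)$ as the sum of $(\Lambda(n)-1)/n$ over the window $v<n\leq \rho v$, drop the nonnegative $\Lambda(n)/n$ terms, and estimate the remaining harmonic block as $\log\rho+O(1/v)$ before sending $v\to\infty$ and then $\rho\to 1^+$. Your explicit handling of the order of limits (liminf in $v$ first, then $\rho\to 1^+$) is if anything slightly more careful than the paper's phrasing.
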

\begin{proof}
Using
\[
\sum_{1 \leq n \leq x} \frac{1}{n} = \log x + \gamma + O(n^{-1}), \qquad x \to \infty,
\]
we have, for $0<x<\infty$ and $\rho>1$,
\begin{align*}
h(\rho x)-h(x)&=\sum_{x<n \leq \rho x} \frac{\Lambda(n)-1}{n}\\
&\geq -\sum_{x<n \leq \rho x} \frac{1}{n}\\
&=-\sum_{1 \leq n \leq \rho x} \frac{1}{n} + \sum_{1 \leq n \leq x} \frac{1}{n}\\
&=-\log(\rho x) +  \log x + O((\rho x)^{-1})  + O(x^{-1})\\
&=-\log \rho + O((\rho x)^{-1})  + O(x^{-1}).
\end{align*}
Hence as $x \to \infty$ and $\rho \to 1^+$, 
\[
h(\rho x)-h(x) \to 0,
\]
which shows that $h$ is slowly decreasing.
\end{proof}

The following is from Widder \cite[pp.~231--232]{widder}.

\begin{lemma}
As $x \to \infty$,
\[
\frac{1}{x} \int_0^\infty g\left(\frac{t}{x} \right) h(t) dt = 2\gamma + O(x^{-1/2}).
\]
\end{lemma}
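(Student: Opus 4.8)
The plan is to recognize that the integral is, up to a sign and a factor of $\frac{1}{x}$, nothing other than $f(1/x)$, so that the stated asymptotics drop out of Theorem~\ref{logd}. The bridge between the two is the observation that $g$ is the derivative of
\[
G(u) = \frac{ue^{-u}}{1-e^{-u}} = \frac{u}{e^u-1},
\]
a function that decreases monotonically from $G(0^+)=1$ to $G(\infty)=0$; in particular $g=G'$ has constant (negative) sign on $(0,\infty)$, which is what makes the interchange of summation and integration below legitimate and the inner integral easy to evaluate.

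First I would write $h(t)=\sum_{n \le t}\frac{\Lambda(n)-1}{n}$ as a sum over all $n\ge 1$ subject to the constraint $t\ge n$, and then interchange the order of summation and integration to get
\[
\frac{1}{x}\int_0^\infty g\left(\frac{t}{x}\right) h(t)\,dt = \frac{1}{x}\sum_{n=1}^\infty \frac{\Lambda(n)-1}{n}\int_n^\infty g\left(\frac{t}{x}\right) dt.
\]
To justify this I would pass to absolute values and invoke Tonelli: since $g$ has constant sign, $\int_n^\infty \left|g(t/x)\right|\,dt = \frac{n}{e^{n/x}-1}$, and because $|\Lambda(n)-1|\le \log n + 1$ grows only logarithmically while $\frac{1}{e^{n/x}-1}$ decays exponentially in $n$, the resulting double series $\sum_n \frac{|\Lambda(n)-1|}{e^{n/x}-1}$ converges for each fixed $x$.

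Next I would evaluate the inner integral by the fundamental theorem of calculus. Substituting $u=t/x$ and using $g=G'$,
\[
\int_n^\infty g\left(\frac{t}{x}\right) dt = x\int_{n/x}^\infty G'(u)\,du = x\bigl(G(\infty)-G(n/x)\bigr) = -xG(n/x) = -\frac{n}{e^{n/x}-1}.
\]
Feeding this back cancels the factor of $n$ and leaves
\[
\frac{1}{x}\int_0^\infty g\left(\frac{t}{x}\right) h(t)\,dt = -\frac{1}{x}\sum_{n=1}^\infty \frac{\Lambda(n)-1}{e^{n/x}-1} = -\frac{1}{x}\,f\left(\frac{1}{x}\right),
\]
since $f(y)=\sum_{n=1}^\infty(\Lambda(n)-1)\frac{e^{-ny}}{1-e^{-ny}}=\sum_{n=1}^\infty \frac{\Lambda(n)-1}{e^{ny}-1}$ evaluated at $y=1/x$.

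Finally, because $f$ agrees with $\sum_{n=1}^\infty(\log n - d(n))e^{-ny}$, Theorem~\ref{logd} gives $f(y)=-\frac{2\gamma}{y}+O(y^{-1/2})$ as $y\to 0^+$. Taking $y=1/x$ and letting $x\to\infty$ turns this into $f(1/x)=-2\gamma x + O(x^{1/2})$, so that $-\frac{1}{x}f(1/x)=2\gamma+O(x^{-1/2})$, which is the claim. Everything after the key identity is a direct computation; I expect the main obstacle to be cleanly justifying the interchange of sum and integral, which in turn rests on establishing the constant sign of $g$ (equivalently the monotonicity of $G$), the one place where a short auxiliary estimate is genuinely needed.
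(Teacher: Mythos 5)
Your proof is correct, and at the top level it is the same argument as the paper's: both identify $\frac{1}{x}\int_0^\infty g(t/x)\,h(t)\,dt$ with $-\frac{1}{x}f(1/x)$ and then quote Theorem \ref{logd} (via the identity $f(y)=\sum_{n\ge 1}(\log n-d(n))e^{-ny}$ stated before the lemma). What differs is how the key identity is established. The paper goes in the opposite direction: it represents $f(x)$ as the Stieltjes integral $\int_0^\infty \frac{te^{-xt}}{1-e^{-xt}}\,dh(t)$ and integrates by parts, discarding the boundary term $\frac{1}{x}\,\frac{te^{-t}}{1-e^{-t}}\,h(t/x)\big|_0^\infty$ without comment (its vanishing needs the boundedness of $h$ together with $h(t/x)=0$ for $t<x$). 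You instead expand $h(t)=\sum_{n}\frac{\Lambda(n)-1}{n}\mathbf{1}[t\ge n]$ under the integral, interchange sum and integral by Tonelli--Fubini, and evaluate $\int_n^\infty g(t/x)\,dt=-xG(n/x)=-n/(e^{n/x}-1)$ by the fundamental theorem of calculus. These are dual manipulations (integration by parts versus Fubini), and each carries one obligation the other avoids: the paper owes the vanishing of the boundary terms, while you owe the interchange, which you discharge explicitly --- modulo the fact that you assert rather than prove that $G(u)=u/(e^u-1)$ is decreasing. That is a two-line check: the numerator of $G'$ is $e^u-1-ue^u$, which vanishes at $u=0$ and has derivative $-ue^u<0$, so it is negative for $u>0$. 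With that supplied, your route has the merit of making the convergence issues explicit where the paper leaves them tacit, at the cost of this small monotonicity lemma.
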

\begin{proof}
Let $I(t)=0$ for $t < 0$ and $I(t)=1$ for $t \geq 0$. Writing
\[
h(x) = \sum_{n=1}^\infty I(x-n)  \frac{\Lambda(n)-1}{n},
\]
we check that for $x>0$,
\begin{align*}
\int_0^\infty \frac{te^{-xt}}{1-e^{-xt}} dh(t)&=\sum_{n=1}^\infty \int_0^\infty 
\frac{te^{-xt}}{1-e^{-xt}}  \frac{\Lambda(n)-1}{n} d(I(t-n))\\
&=\sum_{n=1}^\infty \int_0^\infty 
\frac{te^{-xt}}{1-e^{-xt}}  \frac{\Lambda(n)-1}{n} d\delta_n(t)\\
&=\sum_{n=1}^\infty \frac{ne^{-nx}}{1-e^{-nx}} \frac{\Lambda(n)-1}{n}\\
&=f(x).
\end{align*}

On the other hand, integrating by parts,
\begin{align*}
f(x)&=\int_0^\infty \frac{te^{-xt}}{1-e^{-xt}} dh(t)\\
&=\int_0^\infty \frac{1}{x} \frac{xt e^{-xt}}{1-e^{xt}} dh(t)\\
&=\int_0^\infty \frac{1}{x} \frac{xt e^{-xt}}{1-e^{-xt}} dh(t)\\
&=\int_0^\infty \frac{1}{x} \frac{t e^{-t}}{1-e^{-t}} dh\left(\frac{t}{x}\right)\\
&=\frac{1}{x} \frac{t e^{-t}}{1-e^{-t}} h\left(\frac{t}{x}\right) \bigg|_0^\infty 
- \int_0^\infty \frac{1}{x} g(t) h\left(\frac{t}{x}\right) dt\\
&=-\int_0^\infty \frac{1}{x} g(t) h\left(\frac{t}{x}\right) dt\\
&=-\int_0^\infty g(xt) h(t) dt.
\end{align*}

By Theorem \ref{logd}, as $x \to 0^+$,
\[
f(x) = -\frac{2\gamma}{x} + O(x^{-1/2}),
\]
i.e., as $x \to 0^+$,
\[
\int_0^\infty g(xt) h(t) dt = \frac{2\gamma}{x} + O(x^{-1/2}).
\]
Thus, as $x \to \infty$,
\[
\int_0^\infty g\left(\frac{t}{x} \right) h(t) dt = 2\gamma x + O(x^{1/2}).
\]
\end{proof}

The following is from Widder \cite[p.~232]{widder}.

\begin{lemma}
\[
\int_0^\infty t^{-ix} g(t) dt =\begin{cases}
-1&x=0\\
 ix \zeta(1-ix)\Gamma(1-ix)&x \neq 0.
 \end{cases}
\]
\end{lemma}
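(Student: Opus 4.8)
The plan is to recognize the integral as the Mellin transform of $g$ evaluated on the vertical line $\Re s = 1$, and to compute that Mellin transform by integration by parts, reducing it to the classical Mellin transform of $t/(e^t-1)$. Write $G(t) = \frac{te^{-t}}{1-e^{-t}} = \frac{t}{e^t-1}$, so that $g = G'$ and
\[
\int_0^\infty t^{-ix} g(t)\,dt = \int_0^\infty t^{s-1} g(t)\,dt \Big|_{s=1-ix}.
\]
Since $G(t)\to 1$ as $t\to 0^+$ and $G$ decays exponentially as $t\to\infty$, one checks that $g$ is bounded near $0$ (indeed $g(0)=-\tfrac{1}{2}$) and decays exponentially at infinity; hence $\int_0^\infty t^{\sigma-1}|g(t)|\,dt<\infty$ for every $\sigma>0$, and the Mellin transform $\mathcal{M}g(s)=\int_0^\infty t^{s-1}g(t)\,dt$ is holomorphic on the half-plane $\Re s>0$. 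The target integral is exactly $\mathcal{M}g(1-ix)$.

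First I would record the classical identity $\int_0^\infty \frac{t^{a-1}}{e^t-1}\,dt = \Gamma(a)\zeta(a)$ for $\Re a>1$, which gives $\int_0^\infty t^{s-1}G(t)\,dt=\Gamma(s+1)\zeta(s+1)$ for $\Re s>0$. Next, for $\Re s>1$ I would integrate by parts:
\[
\mathcal{M}g(s) = \big[t^{s-1}G(t)\big]_0^\infty - (s-1)\int_0^\infty t^{s-2}G(t)\,dt.
\]
The boundary term vanishes at $t=\infty$ by exponential decay, and at $t=0$ because $t^{s-1}G(t)\sim t^{s-1}\to 0$ when $\Re s>1$; the remaining integral equals $\Gamma(s)\zeta(s)$ by the identity above. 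Thus
\[
\mathcal{M}g(s) = -(s-1)\Gamma(s)\zeta(s), \qquad \Re s>1.
\]

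Then I would pass to the line $\Re s=1$ by analytic continuation. The right-hand side $-(s-1)\Gamma(s)\zeta(s)$ is holomorphic on all of $\Re s>0$: $\Gamma$ has no poles there, and the simple pole of $\zeta$ at $s=1$ is cancelled by the factor $s-1$. Since $\mathcal{M}g$ is also holomorphic on $\Re s>0$ and the two functions agree on the open set $\Re s>1$, the identity theorem forces them to agree throughout $\Re s>0$, in particular at $s=1-ix$. Evaluating there, $-(s-1)=ix$, giving $\mathcal{M}g(1-ix)=ix\,\zeta(1-ix)\Gamma(1-ix)$ for $x\neq 0$. For $x=0$ I would instead compute directly $\int_0^\infty g(t)\,dt = G(\infty)-G(0) = 0-1 = -1$, which also matches the limit $\lim_{x\to 0} ix\,\zeta(1-ix)\Gamma(1-ix)=-1$, since $\zeta(1-ix)\sim -1/(ix)$ and $\Gamma(1)=1$.

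The main obstacle is precisely the passage to $\Re s=1$: a naive termwise integration using $G(t)=t\sum_{n\geq 1}e^{-nt}$ produces the divergent series $\sum_{n\geq 1} n^{-(1-ix)}$, so the substance of the proof is to avoid summing on the critical line and instead obtain $\zeta(1-ix)$ through its analytic continuation, via the identity theorem applied to two holomorphic functions that coincide on $\Re s>1$. Beyond this, the only care needed is the routine verification that the boundary terms vanish and that $\mathcal{M}g$ is holomorphic on the strip $\Re s>0$.
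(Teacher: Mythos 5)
Your proof is correct, and its computational core coincides with the paper's: integrate by parts to pass from $g$ to $G(t)=te^{-t}/(1-e^{-t})=t/(e^t-1)$, then invoke the classical formula $\int_0^\infty t^{s-1}(e^t-1)^{-1}\,dt=\zeta(s)\Gamma(s)$ for $\Re s>1$. Where you genuinely differ is in the passage to the line $\Re s=1$. The paper regularizes the exponent: it writes the integral as $\lim_{\delta\to 0^+}\int_0^\infty t^{-ix+\delta}\,g(t)\,dt$, integrates by parts at $s=1+\delta-ix$ (so the classical formula applies directly), and then takes the vertical limit $\delta\to 0^+$, handling $x=0$ by the pole cancellation $\lim_{\delta\to 0^+}(-\delta)\zeta(1+\delta)\Gamma(1+\delta)=-1$. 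The interchange of limit and integral in the paper's first step is asserted without justification; it requires dominated convergence, using exactly the bounds on $g$ (boundedness near $0$, exponential decay at infinity) that you record explicitly. You instead prove the identity $\mathcal{M}g(s)=-(s-1)\Gamma(s)\zeta(s)$ on $\Re s>1$, observe that both sides are holomorphic on $\Re s>0$ (the factor $s-1$ killing the pole of $\zeta$), and conclude by the identity theorem. The two devices do the same job; yours makes the analytic-continuation content explicit and supplies the justification the paper elides, at the cost of the (routine) verification that $\mathcal{M}g$ is holomorphic on the half-plane. Your treatment of $x=0$ is also cleaner: the direct evaluation $\int_0^\infty g(t)\,dt=G(\infty)-G(0)=-1$ replaces the paper's limit computation, and consistently matches the value forced by continuity of $-(s-1)\Gamma(s)\zeta(s)$ at $s=1$.
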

\begin{proof}
\begin{align*}
\int_0^\infty t^{-ix} g(t) dt&=\int_0^\infty t^{-ix}  \frac{d}{dt} \left( \frac{te^{-t}}{1-e^{-t}} \right) dt\\
&=\lim_{\delta \to 0} \int_0^\infty t^{-ix+\delta}  \frac{d}{dt} \left( \frac{te^{-t}}{1-e^{-t}} \right) dt\\
&=\lim_{\delta \to 0} \left( t^{-ix+\delta} \frac{te^{-t}}{1-e^{-t}} \bigg |_0^\infty 
+(ix-\delta) \int_0^\infty  t^{-ix+\delta-1} \frac{te^{-t}}{1-e^{-t}} dt \right)\\
&=\lim_{\delta \to 0} (ix-\delta)  \int_0^\infty  t^{-ix+\delta-1} \frac{te^{-t}}{1-e^{-t}} dt\\
&=\lim_{\delta \to 0} (ix-\delta)  \int_0^\infty   \frac{t^{(-ix+\delta+1)-1}e^{-t}}{1-e^{-t}} dt.
\end{align*}
Using
\[
\int_0^\infty \frac{t^{s-1}}{e^t-1} dt = \zeta(s) \Gamma(s), \qquad \Re(s)>1,
\]
this becomes
\[
\int_0^\infty t^{-ix} g(t) dt=\lim_{\delta \to 0^+} (ix-\delta) \zeta(1+\delta-ix)\Gamma(1+\delta-ix).
\]
If $x=0$, then
using 
\[
\zeta(s) = \frac{1}{s-1}+\gamma +O(|s-1|), \qquad s \to 1,
\]
we get
\[
\lim_{\delta \to 0^+} (-\delta) \zeta(1+\delta)\Gamma(1+\delta) = -1.
\]
If $x>0$, then
\[
\lim_{\delta \to 0^+} (ix-\delta) \zeta(1+\delta-ix)\Gamma(1+\delta-ix) = 
ix\zeta(1-ix)\Gamma(1-ix).
\]
\end{proof}

By Wiener's tauberian theorem, it follows that
\[
\sum_{n=1}^\infty \frac{\Lambda(n)-1}{n} = -2\gamma.
\]

\begin{lemma}
\[
h(x) = \int_{\frac{1}{2}}^x \frac{d(\psi(t)-[t])}{t}.
\]
\end{lemma}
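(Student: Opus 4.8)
The plan is to read the right-hand side as a Riemann--Stieltjes integral against a pure-jump integrator and to evaluate it by summing over its jumps. First I would use the definitions already recorded, namely $\psi(t)=\sum_{n\leq t}\Lambda(n)$ and $[t]=\sum_{1\leq n\leq t}1$, to rewrite
\[
\psi(t)-[t]=\sum_{1\leq n\leq t}(\Lambda(n)-1).
\]
This exhibits $t\mapsto\psi(t)-[t]$ as a right-continuous step function that is constant on each interval $[n,n+1)$ and has a jump of size $\Lambda(n)-1$ at each positive integer $n$. Consequently the associated Stieltjes measure $d(\psi(t)-[t])$ is the pure point measure $\sum_{n=1}^\infty(\Lambda(n)-1)\,\delta_n$, a sum of point masses located at the positive integers.

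The next step is to integrate the continuous function $t\mapsto 1/t$ against this measure over $[1/2,x]$. For a continuous integrand the Riemann--Stieltjes integral against a step function reduces to the sum, over the jump points contained in the interval of integration, of the integrand at each jump point times the corresponding jump. Hence
\[
\int_{1/2}^x\frac{d(\psi(t)-[t])}{t}=\sum_{1/2<n\leq x}\frac{\Lambda(n)-1}{n}=\sum_{1\leq n\leq x}\frac{\Lambda(n)-1}{n}=h(x),
\]
the middle equality holding because the integers in $(1/2,x]$ are exactly $1,2,\ldots,[x]$.

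The one point deserving care --- and the reason for starting at $1/2$ rather than at $0$ or $1$ --- is the treatment of the endpoints and of the jump at $n=1$. Since $\Lambda(1)=0$, this jump equals $-1$ and supplies the $n=1$ term $\frac{\Lambda(1)-1}{1}=-1$ of $h(x)$; taking the lower limit to be the non-integer $1/2<1$ places the jump strictly inside the interval, so it is counted exactly once and no spurious contribution from a smaller integer intrudes. I expect the main (and only mild) obstacle to be precisely this bookkeeping: fixing the convention under which the Riemann--Stieltjes integral of a continuous function against a right-continuous jump function equals the sum of its jump contributions, and confirming that the range $1/2<n\leq x$ coincides with the range $1\leq n\leq x$ defining $h$. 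Once that convention is pinned down, the identity is immediate.
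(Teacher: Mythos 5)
Your proof is correct and follows essentially the same route as the paper: both identify $\psi(t)-[t]$ as a step function whose Stieltjes measure is the sum of point masses $(\Lambda(n)-1)\,\delta_n$ at the positive integers, and then evaluate the integral of $1/t$ against it as the sum of jump contributions, yielding $\sum_{1\leq n\leq x}\frac{\Lambda(n)-1}{n}=h(x)$. Your added remark on the endpoint bookkeeping at $t=\tfrac{1}{2}$ and the jump at $n=1$ is a welcome clarification but not a departure from the paper's argument.
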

\begin{proof}
Let $I(t)=0$ for $t < 0$ and $I(t)=1$ for $t \geq 0$. Writing
\[
\psi(x) = \sum_{n=1}^\infty I(x-n) \Lambda(n),
\qquad [x]=
\sum_{n=1}^\infty I(x-n),
\]
we have
\begin{align*}
 \int_{\frac{1}{2}}^x \frac{d(\psi(t)-[t])}{t}
&= \int_{\frac{1}{2}}^x 
\frac{1}{t} d\left(\sum_{n=1}^\infty I(t-n) (\Lambda(n)-1)\right)\\
&= \int_{\frac{1}{2}}^x  \frac{1}{t} 
\sum_{n=1}^\infty (\Lambda(n)-1) d \delta_n(t)\\
&=\sum_{1 \leq n \leq x} \frac{\Lambda(n)-1}{n}\\
&=h(x).
\end{align*}
\end{proof}

Thus, we have established that
\[
\int_{\frac{1}{2}}^\infty \frac{d(\psi(t)-[t])}{t} = -2\gamma.
\]

\section{Hermite}
Hermite \cite{hermite1884}

Hermite \cite{hermite}

\section{Gerhardt}
Gerhardt \cite[p.~196]{gerhardt} refers to Lambert's {\em Architectonic}. 

\section{Levi-Civita}
Levi-Civita \cite{levicivita}

\section{Franel}
Franel \cite{franel51} and \cite{franel52}

The next theorem shows that  
 the set of points on the unit circle that are singularities of $\sum_{n=1}^\infty \frac{z^n}{1-z^n}$ is dense in the unit
circle. Titchmarsh \cite[pp.~160--161, \S 4.71]{titchmarsh}.

\begin{theorem}
For $|z|<1$, define
\[
f(z) = \sum_{n=1}^\infty \frac{z^n}{1-z^n}.
\]
Suppose that $p>0, q>1$ are relatively prime integers.
As $r \to 1^-$,
\[
(1-r) f(r e^{2\pi i/q}) \to \infty.
\]  
\end{theorem}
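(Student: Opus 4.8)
The plan is to set $\omega = e^{2\pi i p/q}$ (the displayed statement is the case $p=1$, but the argument is identical for any $p$ coprime to $q$), which is a \emph{primitive} $q$-th root of unity: here the coprimality of $p$ and $q$ is exactly what guarantees $\omega^n = 1$ precisely when $q \mid n$, while for $q \nmid n$ the number $\omega^n$ is one of the nontrivial $q$-th roots of unity. Since $r<1$ the series $f(r\omega)=\sum d(n)(r\omega)^n$ converges absolutely, so I may split it according to divisibility by $q$:
\[
f(r\omega) = \sum_{q \mid n} \frac{(r\omega)^n}{1-(r\omega)^n} + \sum_{q \nmid n} \frac{(r\omega)^n}{1-(r\omega)^n},
\]
and I will write $R(r)$ for the second (off-diagonal) sum. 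The first (diagonal) sum is the source of the blow-up, while I expect the off-diagonal sum to contribute only an $O((1-r)^{-1})$ amount.

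For the diagonal sum, writing $n=qk$ and using $\omega^{qk}=1$ collapses it to $\sum_{k\geq 1}\frac{r^{qk}}{1-r^{qk}} = f(r^q)$, a positive real number. To show it diverges fast enough, I would use the elementary estimate $1-s^k=(1-s)(1+s+\cdots+s^{k-1})\leq k(1-s)$ for $0<s<1$, so that $\frac{s^k}{1-s^k}\geq\frac{s^k}{k(1-s)}$ and hence
\[
(1-s)f(s)\geq \sum_{k=1}^\infty \frac{s^k}{k} = -\log(1-s).
\]
Taking $s=r^q$ and noting $\frac{1-r}{1-r^q}\to\frac1q$ as $r\to 1^-$ then gives $(1-r)f(r^q)\geq \frac{1-r}{1-r^q}\bigl(-\log(1-r^q)\bigr)\to +\infty$.

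For the off-diagonal sum, when $q\nmid n$ the argument $\theta_n$ of $\omega^n$ is a nonzero multiple of $2\pi/q$, and the identity $|1-r^n e^{i\theta_n}|^2 = (1-r^n)^2 + 4r^n\sin^2(\theta_n/2)$ combined with $\sin^2(\theta_n/2)\geq \sin^2(\pi/q)$ yields the uniform lower bound $|1-(r\omega)^n|\geq 2r^{n/2}\sin(\pi/q)$. Each off-diagonal term is therefore bounded in modulus by $\frac{r^{n/2}}{2\sin(\pi/q)}$, and summing the resulting geometric series gives
\[
|R(r)| \leq \frac{1}{2\sin(\pi/q)}\cdot\frac{r^{1/2}}{1-r^{1/2}}.
\]
Since $(1-r)\frac{r^{1/2}}{1-r^{1/2}} = r^{1/2}(1+r^{1/2})\to 2$, the off-diagonal contribution to $(1-r)f(r\omega)$ stays bounded. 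Combining the two estimates, $|(1-r)f(r\omega)|\geq (1-r)f(r^q) - (1-r)|R(r)|\to\infty$, which is the claim.

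The main obstacle is the off-diagonal estimate: I must produce a lower bound on $|1-(r\omega)^n|$ that is uniform in $n$ and yet still leaves a convergent, $O((1-r)^{-1})$ majorant, so that the genuine singularity carried by the $q\mid n$ terms is not cancelled or swamped. The appearance of $\sin(\pi/q)$ shows exactly how the bound degrades as $q$ grows, which is harmless for each fixed $q$.
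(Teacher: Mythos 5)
Your proof is correct and follows essentially the same route as the paper's: the same split into terms with $q \mid n$ and $q \nmid n$, the same lower bound for the diagonal part (your inequality $1-s^k \leq k(1-s)$ is exactly the paper's geometric-factor estimate, yielding the $-\frac{1}{q}\log(1-r^q)$ divergence), and the same identity $|1-r^n e^{i\theta_n}|^2 = (1-r^n)^2 + 4r^n\sin^2(\theta_n/2)$ with $\sin^2(\theta_n/2)\geq\sin^2(\pi/q)$ to control the off-diagonal part by a bounded quantity. Your explicit closing triangle-inequality step and your remark that coprimality of $p$ and $q$ is what makes $\omega^n=1$ exactly when $q \mid n$ are both points the paper leaves implicit, but the argument is the same.
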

\begin{proof}
Set $z=re^{2\pi i p/q}$ and write
\[
\sum_{n=1}^\infty \frac{z^n}{1-z^n} =
\sum_{n \equiv 0 \pmod{q}}   \frac{z^n}{1-z^n}
+ \sum_{n \not \equiv 0 \pmod{q}}  \frac{z^n}{1-z^n}.
\]

On the one hand, 
\begin{align*}
(1-r) \sum_{n \equiv 0 \pmod{q}}   \frac{z^n}{1-z^n}&=
(1-r) \sum_{m=1}^\infty \frac{z^{mq}}{1-z^{mq}}\\
&=(1-r) \sum_{m=1}^\infty \frac{(re^{2\pi i p/q})^{mq}}{1-(re^{2\pi i p/q})^{mq}}\\
&=(1-r) \sum_{m=1}^\infty \frac{r^{mq}}{1-r^{mq}}\\
&=\frac{1-r}{1-r^q} \sum_{m=1}^\infty \frac{r^{mq}}{1+r^q+\cdots+r^{(m-1)q}}\\
&=\frac{1}{1+r+\cdots+r^{q-1}} \sum_{m=1}^\infty \frac{r^{mq}}{1+r^q+\cdots+r^{(m-1)q}}\\
&\geq \frac{1}{q} \sum_{m=1}^\infty \frac{r^{mq}}{m}\\
&=-\frac{1}{q} \log(1-r^q)\\
&\to \infty
\end{align*}
as $r \to 1$.

On the other hand, for $n \not \equiv 0 \pmod{q}$ we have
\begin{align*}
|1-z^n|^2 &= |1-r^n e^{2\pi ipn/q}|^2\\
& = (1-r^n e^{2\pi ipn/q})(1-r^n e^{-2\pi i pn/q})\\
&=1-r^n(e^{2\pi ipn/q}+e^{-2\pi ipn/q}) +r^{2n}\\
&=1-2r^n \cos 2\pi pn/q + r^{2n}\\
&=1-2r^n + 4r^n \sin^2 \frac{\pi pn}{q} + r^{2n}\\
&= (1-r^n)^2 + 4r^n \sin^2 \frac{\pi pn}{q}.
\end{align*}
So far we have not used the hypothesis that $n \equiv 0 \pmod{q}$. We use it to obtain
\[
\sin \frac{\pi pn}{q} \geq \sin \frac{\pi}{q}.
\]
With this we have
\[
|1-z^n|^2 \geq 4r^n \sin^2 \frac{\pi}{q},
\]
and therefore, as $r<1$,
\begin{align*}
(1-r) \left| \sum_{n \not \equiv 0 \pmod{q}} \frac{z^n}{1-z^n} \right|&\leq
(1-r) \sum_{n \not \equiv 0 \pmod{q}} \frac{|z|^n}{|1-z^n|}\\
&\leq (1-r) \sum_{n \not \equiv 0 \pmod{q}} \frac{r^n}{2r^{n/2} \sin \frac{\pi}{q}}\\
&\leq \frac{1-r}{2 \sin \frac{\pi}{q}}  \sum_{n=0}^\infty r^{n/2}\\
&=\frac{1-r}{2 \sin \frac{\pi}{q}} \cdot \frac{1}{1-\sqrt{r}}\\
&=\frac{1+\sqrt{r}}{2\sin \frac{\pi}{q}}\\
&<\frac{1}{\sin \frac{\pi}{q}}.
\end{align*}
\end{proof}

\section{Wigert}
The following result is proved by Wigert \cite{wigert}. Our proof follows Titchmarsh \cite[p.~163, Theorem 7.15]{zeta}. 
Cf. Landau  \cite{landau1918}.

\begin{theorem}
For $\lambda<\frac{1}{2}\pi$ and $N \geq 1$,
\[
\sum_{n=1}^\infty d(n) e^{-nz} = \frac{\gamma}{z}-\frac{\log z}{z}+\frac{1}{4}- \sum_{n=0}^{N-1} \frac{B_{2n+2}^2}{(2n+2)!(2n+2)} z^{2n+1}
+O(|z|^{2N})
\]
as $z \to 0$ in any angle $|\arg z| \leq \lambda$.
\label{wigert}
\end{theorem}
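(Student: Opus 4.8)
The plan is to obtain the asymptotic expansion of the Lambert series $\sum_{n=1}^\infty d(n)e^{-nz}$ by representing it as a contour integral involving $\zeta(s)^2\Gamma(s)$ and then shifting the contour across the poles, which is the standard Mellin-transform approach that Titchmarsh uses for the $\zeta^2$ problem. The starting point is the Dirichlet series identity $\sum_{n=1}^\infty d(n)n^{-s}=\zeta(s)^2$ for $\Re s>1$, together with the Cahen–Mellin integral
\[
e^{-w}=\frac{1}{2\pi i}\int_{(c)} \Gamma(s) w^{-s}\,ds,\qquad c>0,\ \Re w>0.
\]
Setting $w=nz$ (valid since $|\arg z|\le\lambda<\tfrac{1}{2}\pi$ keeps $\Re(nz)>0$) and summing over $n$, I would write, for $c>1$,
\[
\sum_{n=1}^\infty d(n)e^{-nz}=\frac{1}{2\pi i}\int_{(c)}\Gamma(s)\,\zeta(s)^2\,z^{-s}\,ds,
\]
the interchange of sum and integral being justified by absolute convergence when $c>1$.

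**Next I would** shift the contour to the left, from $\Re s=c>1$ to $\Re s=-2N+\tfrac{1}{2}$ (or any line just to the left of $s=-(2N-1)$), and collect residues. The integrand $\Gamma(s)\zeta(s)^2 z^{-s}$ has a double pole at $s=1$ coming from $\zeta(s)^2$, a simple pole at $s=0$ from $\Gamma(s)$ (where $\zeta(0)=-\tfrac12$ gives $\zeta(0)^2=\tfrac14$), and simple poles at the negative integers $s=-1,-2,-3,\dots$ from $\Gamma(s)$. The double pole at $s=1$ produces the term $\frac{\gamma}{z}-\frac{\log z}{z}$: using $\zeta(s)^2=\frac{1}{(s-1)^2}+\frac{2\gamma}{s-1}+\cdots$ near $s=1$ and $z^{-s}=z^{-1}(1-(s-1)\log z+\cdots)$, the residue of the product picks out exactly these two terms. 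The simple pole at $s=0$ contributes $\Gamma\text{-residue}\cdot\zeta(0)^2=\tfrac14$. At $s=-m$ for $m\ge 1$ the residue of $\Gamma(s)$ is $(-1)^m/m!$, and $\zeta(-m)^2 z^{m}$ appears; since $\zeta(-2k)=0$ at the negative even integers (trivial zeros), only the odd negative integers $s=-(2n+1)$ survive, and these assemble into $-\sum_{n=0}^{N-1}\frac{B_{2n+2}^2}{(2n+2)!(2n+2)}z^{2n+1}$ after using $\zeta(-(2n+1))=-\frac{B_{2n+2}}{2n+2}$ so that $\zeta(-(2n+1))^2=\frac{B_{2n+2}^2}{(2n+2)^2}$ together with the $\Gamma$-residue $\frac{1}{(2n+1)!}$.

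**The main obstacle** will be bounding the shifted integral $\frac{1}{2\pi i}\int_{(-2N+1/2)}\Gamma(s)\zeta(s)^2 z^{-s}\,ds$ to show it is $O(|z|^{2N})$, and this is where the restriction $|\arg z|\le\lambda<\tfrac12\pi$ enters crucially. On the vertical line $\Re s=\sigma_0$ with $\sigma_0=-2N+\tfrac12$ one has $|z^{-s}|=|z|^{-\sigma_0}e^{(\Im s)\arg z}=|z|^{2N-1/2}e^{t\arg z}$ where $t=\Im s$; the factor $e^{t\arg z}$ grows like $e^{|t|\lambda}$, but $\Gamma(\sigma_0+it)$ decays like $e^{-\pi|t|/2}$ by Stirling, and since $\lambda<\tfrac{\pi}{2}$ the product $\Gamma(s)z^{-s}$ still decays exponentially in $|t|$, while $\zeta(\sigma_0+it)^2$ grows only polynomially by the functional-equation/convexity bounds. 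Hence the integral converges and is bounded by a constant times $|z|^{2N-1/2}$; one should verify the exponent matches $O(|z|^{2N})$ by choosing the line slightly further left (e.g. $\Re s=-(2N+\tfrac12)$, giving $|z|^{2N+1/2}=O(|z|^{2N})$) or by reading the order-of-growth estimate more carefully, and must also confirm that the horizontal segments used to close the contour vanish in the limit, again using the exponential $\Gamma$-decay. Assembling the residues with the integral bound yields exactly the stated expansion.
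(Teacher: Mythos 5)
Your proposal is correct and takes essentially the same route as the paper's proof: the Mellin representation $\sum_{n=1}^\infty d(n)e^{-nz}=\frac{1}{2\pi i}\int_{(c)}\Gamma(s)\zeta^2(s)z^{-s}\,ds$, a contour shift collecting the double pole at $s=1$, the pole at $s=0$, and the simple poles at the negative odd integers (the negative even integers being cancelled by the trivial zeros of $\zeta^2$), with the shifted integral bounded by playing Stirling's decay $e^{-\pi|t|/2}$ of $\Gamma$ against $|z^{-s}|\leq|z|^{-\sigma}e^{\lambda|t|}$ and $\lambda<\tfrac{\pi}{2}$. The paper resolves the exponent issue you flag by shifting to the line $\Re s=-2N$ exactly (where there is no pole), giving $O(|z|^{2N})$ directly, which is the same fix you propose.
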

\begin{proof}
For $\sigma>1$, $s=\sigma+it$,
\[
\zeta^2(s) = \sum_{n=1}^\infty \frac{d(n)}{n^s}.
\]
Using this, for
 $\Re z>0$ we have
\begin{align}
\frac{1}{2\pi i}\int_{2-i\infty}^{2+i\infty}
\Gamma(s) \zeta^2(s) z^{-s} ds & = \sum_{n=1}^\infty d(n) \frac{1}{2\pi i} \int_{2-i\infty}^{2+i\infty}
\Gamma(s) (nz)^{-s} ds
\nonumber \\
& = \sum_{n=1}^\infty d(n) e^{-nz}.
\label{mellin}
\end{align}

Define $F(s) = \Gamma(s) \zeta^s(s) z^{-s}$. $F$ has poles at
$1,0$, and the negative odd integers. (At each negative even integer, $\Gamma$ has a first order pole but $\zeta^2$ has a second order
zero.) First we determine the residue of $F$ at $1$.
We use the asymptotic formula
\[
\zeta(s) = \frac{1}{s-1}+\gamma+O(|s-1|), \qquad s \to 1,
\]
the asymptotic formula
\[
\Gamma(s)=1-\gamma(s-1)+O(|s-1|^2), \qquad s \to 1,
\]
and  the asymptotic formula
\[
z^{-s} = \frac{1}{z}-\frac{\log z}{z} (s-1)+O(|s-1|^2), \qquad  s \to 1,
\]
to obtain
\begin{align*}
 \Gamma(s) \zeta^s(s) z^{-s} &=(1-\gamma(s-1)+O(|s-1|^2)  )
\cdot \left(\frac{1}{(s-1)^2}+\frac{2\gamma}{s-1}+O(|s-1|^2)\right)\\
&\cdot \left( \frac{1}{z}-\frac{\log z}{z} (s-1)+O(|s-1|^2) \right)\\
&=\frac{1}{z(s-1)^2}
-\frac{\gamma}{z(s-1)}+\frac{2\gamma}{z(s-1)}-\frac{\log z}{z(s-1)}+O(1)\\
&=\frac{1}{z(s-1)^2}+\frac{\gamma}{z(s-1)}-\frac{\log z}{z(s-1)}+O(1).
\end{align*}
Hence the residue of $F$ at $1$ is 
\[
\frac{\gamma}{z}-\frac{\log z}{z}.
\]

Now we determine the residue of $F$ at $0$. The residue of $\Gamma$ at $0$ is $1$, and 
hence the residue of $F$ at $0$ is 
\[
1 \cdot \zeta^2(0) \cdot z^0 = \zeta^2(0) = \left( -\frac{1}{2} \right)^2 = \frac{1}{4}.
\]

Finally, for $n \geq 0$ we determine the residue of $F$ at $-(2n+1)$. The residue of $\Gamma$ at $-(2n+1)$ is
$\frac{(-1)^{2n+1}}{(2n+1)!}$, hence the residue of $F$ at $-(2n+1)$ is
\[
\frac{(-1)^{2n+1}}{(2n+1)!} \cdot \zeta^2(2n+1) \cdot z^{2n+1} = 
-\frac{B_{2n+2}^2}{(2n+2)!(2n+2)} z^{2n+1}
\]  
using 
\[
\zeta(-m) = -\frac{B_{m+1}}{m+1}, \qquad m \geq 1.
\]

Let $M>0$,
and let $C$ be the rectangular path  starting at $2-iM$, then going to $2+iM$, then going to $-2N+iM$,  then going to $-2N-iM$, and then ending 
at $2-iM$. 
By the residue theorem,
\begin{equation}
\int_C F(s) ds = 2\pi i\left(\frac{\gamma}{z}-\frac{\log z}{z}+\frac{1}{4} + \sum_{n=0}^{N-1}  -\frac{B_{2n+2}^2}{(2n+2)!(2n+2)} z^{2n+1}
\right).
\label{residue}
\end{equation}
Denote the right-hand sideof \eqref{residue}  by $2\pi i R$.
We have
\[
\int_C F(s) ds = 
\int_{2-iM}^{2+iM} F(s) ds + \int_{2+iM}^{-2N+iM} F(s) ds
+ \int_{-2N+iM}^{-2N-iM} F(s) ds +
\int_{-2N-iM}^{2-iM} F(s) ds.
\]
We shall show that the second and fourth integrals tend to $0$ as $M \to \infty$.
For $s=\sigma+it$ with $-2N \leq \sigma \leq 2$, Stirling's formula \cite[p.~151]{titchmarsh}  tells us that
\[
|\Gamma(s)| \sim \sqrt{2\pi} e^{-\frac{\pi}{2} |t|} |t|^{\sigma-\frac{1}{2}}, \qquad |t| \to \infty.
\]
As well \cite[p.~95]{zeta},
there is some $K>0$  such that in the half-plane $\sigma \geq -2N$,
\[
\zeta(s)=O(|t|^K).
\]
Also,
\begin{align*}
z^{-s} &= e^{-s \log z}\\
& = e^{-(\sigma+it)(\log |z|+i\arg z)} \\
&= e^{-\sigma \log|z| + t \arg z - i(\sigma \arg z+t \log|z|)},
\end{align*}
and so for $|\arg z| \leq \lambda$,
\[
|z^{-s}| = e^{-\sigma \log|z| + t \arg z}  \leq e^{-\sigma \log |z|+\lambda |t|} = |z|^{-\sigma} e^{\lambda |t|}.
\]
Therefore
\[
\left| \int_{2+iM}^{-2N+iM} F(s) ds \right| 
\leq (2+2N) \sup_{-2N \leq \sigma \leq 2} |F(\sigma+iM)|
=O(e^{-\frac{\pi}{2} M} M^{\sigma-\frac{1}{2}} M^{2K}  |z|^{-\sigma} e^{\lambda M}),
\]
and because $\lambda<\frac{\pi}{2}$ this tends to $0$ as $M \to \infty$.
Likewise,
\[
\left|\int_{-2N-iM}^{2-iM} F(s) ds \right| \to 0
\]
as $M \to \infty$. It follows that
\[
\int_{2-i\infty}^{2+i\infty} F(s) ds +\int_{-2N+i\infty}^{-2N-i\infty} F(s) ds
=2\pi i R.
\]
Hence,
\[
\int_{2-i\infty}^{2+i\infty} F(s) ds = 2\pi i R + \int_{-2N-i\infty}^{-2N+i\infty} F(s) ds.
\]
We bound the integral on the right-hand side. We have
\[
 \int_{-2N-i\infty}^{-2N+i\infty} F(s) ds = 
  \int_{\sigma=-2N, |t| \leq 1} F(s) ds+
    \int_{\sigma=-2N, |t| > 1} F(s)ds.
\]
The first integral satisfies
\[
\left|   \int_{\sigma=-2N, |t| \leq 1} F(s) ds \right|
\leq  \int_{\sigma=-2N, |t| \leq 1} |\Gamma(s) \zeta^2(s)|  |z|^{-\sigma} e^{\lambda |t|}
ds
= |z|^{2N} \cdot O(1) = O(|z|^{2N}),
\]
because $\Gamma(s) \zeta^2(s)$ is continuous on the path of integration.
The second integral satisfies
\begin{align*}
\left| \int_{\sigma=-2N, |t|>1} F(s) ds \right| &
\leq
 \int_{\sigma=-2N, |t|>1}
e^{-\frac{\pi}{2} |t|} |t|^{\sigma-\frac{1}{2}} |t|^K |z|^{-\sigma} e^{\lambda |t|}  ds\\
&=|z|^{2N}  \int_{\sigma=-2N, |t|>1} e^{-\frac{\pi}{2} |t|} |t|^{-2N-\frac{1}{2}} |t|^K  e^{\lambda |t|}  dt\\
&=|z|^{2N} \cdot O(1)\\
&=O(|z|^{2N}),
\end{align*}
because $\lambda<\frac{\pi}{2}$. This establishes
\[
\frac{1}{2\pi i} \int_{2-i\infty}^{2+i\infty} F(s) ds = R +O(|z|^{2N}).
\]
Using \eqref{mellin} and \eqref{residue}, this becomes
\[
\sum_{n=1}^\infty d(n) e^{-nz} = 
\frac{\gamma}{z}-\frac{\log z}{z}+\frac{1}{4} - \sum_{n=0}^{N-1}  \frac{B_{2n+2}^2}{(2n+2)!(2n+2)} z^{2n+1}
+O(|z|^{-2N}),
\]
completing the proof.
\end{proof}

For example,
as $B_2=\frac{1}{6},B_4=-\frac{1}{30},B_6=\frac{1}{42}$, the above theorem tells us that
\[
\sum_{n=1}^\infty d(n)e^{-nz} =  \frac{\gamma}{z}-\frac{\log z}{z}+\frac{1}{4}
-\frac{z}{144}
-\frac{z^3}{86400}
-\frac{z^5}{7620480}+
O(|z|^6).
\]

\section{Steffensen}
Steffensen \cite{steffensen}

\section{Szeg{\H o}}
Szeg{\H o} \cite{szego}

\section{P\'olya and Szeg{\H o}}
P\'olya and Szeg{\H o} \cite{polya}

\section{Partition function}
Let 
\[
F(x)=\sum_{n=0}^\infty p(n) x^n = \prod_{n=1}^\infty \frac{1}{1-x^n}.
\]
Taking the logarithm,
\[
\log F(x)=\sum_{n=1}^\infty \log \frac{1}{1-x^n}
=-\sum_{n=1}^\infty \log(1-x^n)
=-\sum_{n=1}^\infty \sum_{m=1}^\infty -\frac{(x^n)^m}{m},
\]
and switching the order of summation gives
\[
\log F(x)
=\sum_{m=1}^\infty \frac{1}{m} \sum_{n=1}^\infty (x^m)^n
=\sum_{m=1}^\infty \frac{1}{m} \frac{x^m}{1-x^m}.
\]
On the one hand, for $0<x<1$ we have $mx^{m-1}(1-x)<1-x^m$ and using this,
\[
\sum_{m=1}^\infty \frac{1}{m} \frac{x^m}{1-x^m}<\sum_{m=1}^\infty \frac{1}{m} \frac{x^m}{mx^{m-1}(1-x)}
=\frac{x}{1-x} \sum_{m=1}^\infty \frac{1}{m^2}=\frac{\pi^2}{6} \frac{x}{1-x}.
\]
On the other hand, for $-1<x<1$ we have $1-x^m<m(1-x)$, and using this, for $0<x<1$ we have
\[
\sum_{m=1}^\infty \frac{1}{m} \frac{x^m}{1-x^m}
>\sum_{m=1}^\infty \frac{1}{m} \frac{x^m}{m(1-x)}
=\frac{1}{1-x} \sum_{m=1}^\infty \frac{x^m}{m^2}.
\]
Thus, for $0<x<1$,
\[
\sum_{m=1}^\infty \frac{x^m}{m^2} < (1-x) \log F(x) < \frac{\pi^2}{6}x.
\]
Taking $x \to 1^{-}$ gives
\[
\frac{\pi^2}{6} \leq \lim_{x \to 1^-} (1-x)\log F(x) \leq \frac{\pi^2}{6},
\]
i.e.,
\[
\log F(x) \sim \frac{\pi^2}{6} \frac{1}{1-x}, \qquad x \to 1^-.
\]
See Stein and Shakarchi \cite[p.~311]{steinII}.

\section{Hansen}
Hansen \cite{hansen}

\section{Kiseljak}
Kiseljak \cite{kiseljak}

\section{Unsorted}
In 1892, in volume~VII, no.~23, 
p.~296 of the weekly {\em Naturwissenschaftliche Rundschau}, it is stated that for the year 1893, one of the six prize questions
for the 
Belgian Academy of Sciences in Brussels
 is
to determine the sum of the Lambert series
\[
\frac{x}{1-x}+\frac{x^2}{1-x^2}+\frac{x^3}{1-x^3}+\cdots,
\]
or if one cannot do this, to find a differential equation that determines the function.

Gram \cite{gram} on distribution of prime numbers.

Hardy \cite{divergent}

Bohr and Cramer \cite[p.~820]{bohr}

Flajolet, Gourdon and Dumas \cite{flajolet}

\bibliographystyle{plain}
\bibliography{lambert}

\end{document}